\documentclass[final,leqno,onefignum,onetabnum]{siamltexmm}
\usepackage{amsmath}
\usepackage{amsbsy}
\usepackage{amssymb}
\usepackage[dvips]{graphicx}
\usepackage{arydshln}

\title{The $f$-Sensitivity Index
\thanks{This work was supported by the U.S. National Science Foundation under Grant Numbers 1130147 and 1462385.}}

\author{TeX Production\thanks{Society for Industrial and
Applied Mathematics, Philadelphia, Pennsylvania.
(\email{multimedia@siam.org}). Questions, comments, or corrections
to this document may be directed to that email address.}}

\author{Sharif Rahman\thanks{Applied Mathematics \& Computational Sciences, The University of Iowa, Iowa City, IA 52242 (\email{sharif-rahman@uiowa.edu}). Questions, comments, or corrections
to this document may be directed to that email address.}}

\begin{document}

\maketitle
\newcommand{\slugmaster}{%
\slugger{juq}{xxxx}{xx}{x}{x--x}}

\begin{abstract}
This article presents a general multivariate $f$-sensitivity index, rooted in the $f$-divergence between the unconditional and conditional probability measures of a stochastic response, for global sensitivity analysis. Unlike the variance-based Sobol index, the $f$-sensitivity index is applicable to random input following dependent as well as independent probability distributions.  Since the class of $f$-divergences supports a wide variety of divergence or distance measures, a plethora of $f$-sensitivity indices are possible, affording diverse choices to sensitivity analysis. Commonly used sensitivity indices or measures, such as mutual information, squared-loss mutual information, and Borgonovo's importance measure, are shown to be special cases of the proposed sensitivity index.  New theoretical results, revealing fundamental properties of the $f$-sensitivity index and establishing important inequalities, are presented. Three new approximate methods, depending on how the probability densities of a stochastic response are determined, are proposed to estimate the sensitivity index.  Four numerical examples, including a computationally intensive stochastic boundary-value problem, illustrate these methods and explain when one method is more relevant than the others.
\end{abstract}

\begin{keywords}
Borgonovo's importance measure, $f$-sensitivity index, kernel density estimation, mutual information, polynomial dimensional decomposition, squared-loss mutual information.
\end{keywords}

\pagestyle{myheadings}
\thispagestyle{plain}
\markboth{S. RAHMAN}{THE $f$-SENSITIVITY INDEX}

\section{Introduction}
Complex system modeling and simulation often mandate global sensitivity analysis, which constitutes the study of how the global variation of input, due to its uncertainty, influences the overall uncertain behavior of a response of interest. Most common approaches to sensitivity analysis are firmly anchored in the second-moment properties --- the output variance --- which is divvied up, qualitatively or quantitatively, to distinct sources of input variation \cite{sobol01}. There exist a multitude of methods or techniques for calculating the resultant sensitivity indices of a function of independent variables: the random balance design method \cite{tarantola06}, the state-dependent parameter metamodel \cite{ratto07}, Sobol's method \cite{sobol93}, and the polynomial dimensional decomposition (PDD) method \cite{rahman11}, to name but four. A few methods, such as those presented by Kucherenko, Tarantola, and Annoni \cite{kucherenko12} and Rahman \cite{rahman14}, are also capable of sensitivity analysis entailing correlated or dependent input.

Implicit in the variance-driven global sensitivity analysis is the assumption that the statistical moments satisfactorily describe the stochastic response. In many applications, however, the variance provides a restricted summary of output uncertainty. Therefore, sensitivity indicators stemming solely from the variance should be carefully interpreted. A more rational sensitivity analysis should account for the entire probability distribution of an output variable, meaning that alternative and more appropriate sensitivity indices, based on probabilistic characteristics above and beyond the variance, should be considered. Addressing some of these concerns has led to a sensitivity index by exploiting the ${\cal L}_{1}$ distance between two output probability density functions \cite{borgonovo07}. Such sensitivity analysis establishes a step in the right direction and is founded on the well-known total variational distance between two probability measures. There remain two outstanding research issues for further improvements of density-based sensitivity analysis. First, there is no universal agreement in selecting the total variational distance as the undisputed measure of dissimilarity or affinity between two output probability density functions. In fact, a cornucopia of divergence or distance measures exist in the literature of information theory.  Therefore, a more general framework, in the spirit of density-based measures, should provide diverse choices to sensitivity analysis \cite{borgonovo14,daviega14}.  Second, the density-based sensitivity indices in general are more difficult to calculate than the variance-based sensitivity indices.  This is primarily because the probability density function is harder to estimate than the variance.  Moreover, nearly all estimation methods available today are very expensive due to the existence of the inner and outer integration loops.  Therefore, efficient computational methods for computing density-based sensitivity indices are desirable.

The purpose of this paper is twofold. First, a brief exposition of the $f$-divergence measure is given in section 2, setting the stage for a general multivariate sensitivity index, referred to as the $f$-sensitivity index, presented in section 3.  The section includes new theoretical results representing fundamental properties and important inequalities pertaining to the $f$-sensitivity index.  Second, section 4 introduces three distinct approximate  methods for estimating the $f$-sensitivity index.  The methods depend on how the probability densities of a stochastic response are estimated, including an efficient surrogate approximation commonly used for high-dimensional uncertainty quantification.  Numerical results from three mathematical functions, as well as from a computationally intensive stochastic mechanics problem, are reported in section 5.  Finally, conclusions are drawn in section 6.

\section{$f$-Divergence measure}
Let $\mathbb{N}$, $\mathbb{N}_{0}$, $\mathbb{R}$, and $\mathbb{R}_{0}^{+}$
represent the sets of positive integer (natural), non-negative integer,
real, and non-negative real numbers, respectively. For $k\in\mathbb{N}$,
denote by $\mathbb{R}^{k}$ the $k$-dimensional Euclidean space and by $\mathbb{N}_{0}^{k}$
the $k$-dimensional multi-index space.  These standard notations will be used throughout the paper.

\subsection{Definition}
Let $(\Psi, {\cal G})$ be a measurable space, where $\Psi$ is a sample space and $\cal G$ is a $\sigma$-algebra of the subsets of $\Psi$, satisfying $|{\Psi}|>1$ and $|{\cal G}|>2$, and $\mu$ be a $\sigma$-finite measure on $(\Psi, {\cal G})$.  Let $\cal P$ be a set of all probability measures on $(\Psi, \cal G)$, which are absolutely continuous with respect to $\mu$.  For two such probability measures $P_1,P_2 \in {\cal P}$, let $dP_1/d\mu$ and $dP_2/d\mu$ denote the Radon-Nikodym derivatives of $P_1$ and $P_2$ with respect to the dominating measure $\mu$, that is, $P_1 << \mu$ and $P_2 << \mu$.

Let $f:[0,\infty)\to (-\infty,\infty]$ be an extended real-valued function, which is

\begin{enumerate}
\item
continuous on $[0,\infty)$ and finite-valued on $(0,\infty)$;
\item
convex on $[0,\infty)$, that is, $f(\lambda t_1 + (1-\lambda) t_2)\le \lambda f(t_1) (1-\lambda)f(t_2)$ for any $t_1,t_2 \in [0,\infty)$ and $\lambda \in [0,1]$;
\footnote{Trivial convex functions of the form $c_1+c_2t$, $c_1,c_2 \in \mathbb{R}$, are excluded.}
\item
strictly convex at $t=1$, that is, $f(1) < \lambda f(t_1) + (1-\lambda)f(t_2)$ for any $t_1,t_2 \in [0,\infty)\setminus \{1\}$ and $\lambda \in (0,1)$ such that $\lambda t_1 + (1-\lambda) t_2=1$;
\footnote{Geometrically, strict convexity means that each chord linking two function values $f(t_1)$ and $f(t_2)$ evaluated on two sides of the point $t=1$ on the graph of $f$ lies above the function value $f(1)$.}; and
\item
equal to \emph{zero} at $t=1$, that is, $f(1)=0$.
\end{enumerate}
The $f$-divergence, describing the difference or discrimination between two probability measures $P_1$ and $P_2$, is defined by the integral
\begin{equation}
D_f\left( P_1 \parallel P_2\right):=
\int_{\Psi} f\left(\dfrac{dP_1/d\mu}{dP_2/d\mu}\right) \frac{dP_2}{d\mu} d\mu,
\label{1}
\end{equation}
provided that the undefined expressions are interpreted by \cite{csiszar67,csiszar63}
\begin{equation*}
f(0)=\lim_{t \to 0_+} f(t),~~0 \cdot f\left( \frac{0}{0} \right) = 0,
\end{equation*}
\begin{equation*}
0 \cdot f\left( \frac{a}{0} \right)=\lim_{\epsilon \to 0_+} \epsilon f \left( \frac{a}{\epsilon} \right)=
a \lim_{t \to \infty} \frac{f(t)}{t}~~(0 < a < \infty).
\end{equation*}
To define the $f$-divergence for absolutely continuous probability measures in terms of elementary probability theory, take $\Psi$ to be the real line and $\mu$ to be the Lebesgue measure, that is, $d\mu=d\xi$,  $\xi \in \mathbb R$, so that $dP_1/d\mu$ and $dP_2/d\mu$ are simply probability density functions, denoted by $f_1(\xi)$ and $f_2(\xi)$, respectively. Then the $f$-divergence can also be defined by
\begin{equation}
D_f\left( P_1 \parallel P_2\right):=
\int_{\mathbb{R}}f\left(\dfrac{f_1(\xi)}{f_2(\xi)}\right) f_2(\xi)d\xi.
\label{2}
\end{equation}
The divergence measures in (\ref{1}) and (\ref{2}) were introduced in the 1960s by Csisz\'ar \cite{csiszar67,csiszar63}, Ali and Silvey \cite{ali66}, and Morimoto \cite{morimoto63}.  Similar definitions exist for discrete probability measures.

\subsection{General properties}
Vajda \cite{vajda72}, Liese and Vajda \cite{liese87}, and \"{O}sterreicher \cite{osterreicher02} discussed general properties of the $f$-divergence measure, including a few axiomatic ones. The basic but important properties are as follows \cite{liese87,osterreicher02,vajda72}: \\

\begin{enumerate}
\item
\emph{Non-negativity and reflexivity:} $D_f(P_1 \parallel P_2)\ge 0$ with equality if and only if $P_1=P_2$. \\

\item
\emph{Duality:} $D_f(P_1 \parallel P_2)=D_{f^*}(P_2 \parallel P_1)$, where $f^*(t):=tf(1/t)$, $t \in (0,\infty)$, is the *-conjugate (convex) function of $f$. When $f(t)=f^*(t)$, $f$ is *-self conjugate. \\

\item
\emph{Invariance:} $D_f(P_1 \parallel P_2)=D_{f^\dagger}(P_1 \parallel P_2)$, where $f^\dagger(t)=f(t)+c(t-1)$, $c \in \mathbb{R}$. \\

\item
\emph{Symmetry:} $D_f(P_1 \parallel P_2)=D_f(P_2 \parallel P_1)$ if and only if $f^*(t)-f(t)=c(t-1)$, where $f^*(t):=tf(1/t)$, $t \in (0,\infty)$, and $c \in \mathbb{R}$. When $c=0$, the symmetry and duality properties coincide. \\

\item
\emph{Range of Values:} $0 \le D_f(P_1 \parallel P_2)\le f(0)+f^*(0)$, where  $f(0)=\lim_{t \to 0_+} f(t)$ and $f^*(0)=\lim_{t \to 0_+} tf(1/t)=\lim_{t \to \infty} f(t)/t$. The left equality holds if and only if $P_1=P_2$. The right equality holds if and only if $P_1 \perp P_2$, that is, for mutually singular (orthogonal) measures, and is attained when $f(0)+f^*(0) < \infty$.  \\
\end{enumerate}

The normalization condition $f(1)=0$ is commonly adopted to ensure that the smallest possible value of $D_f(P_1 \parallel P_2)$ is \emph{zero}.  But fulfilling such condition by the class $\mathbb{F}$ of convex functions $f$ is not required.  This is because, for the subclass $\mathbb{F}' \subset \mathbb{F}$ such that $f \in \mathbb{F}'$ satisfies $f(1)=0$, the shift by the constant $-f(1)$ sends every $f \in \mathbb{F} - \mathbb{F}'$ to $\mathbb{F}'$.  Indeed, some of these properties may still hold if $f(1) \ne 0$ or if $f$ is not restricted to the convexity properties.

Depending on how $f$ is defined, the $f$-divergence may or may not be a true metric.  For instance, it is not necessarily symmetric in $P_1$ and $P_2$ for an arbitrary convex function $f$; that is, the $f$-divergence from $P_1$ to $P_2$ is generally not the same as that from $P_2$ to $P_1$, although it can be easily symmetrized when required.  Furthermore, the $f$-divergence does not necessarily satisfy the triangle inequality.

\subsection{Common instances}
It is well known that $D_f$ has a versatile functional form, resulting in a number of popular information divergence measures.  Indeed, many of the well-known divergences or distances commonly used in information theory and statistics are easily reproduced by appropriately selecting the generating function $f$. Familiar examples of the $f$-divergence include the forward and reversed Kullback-Leibler divergences $D_{KL}$ and $D_{KL'}$ \cite{kullback51}, Kolmogorov total variational distance $D_{TV}$ \cite{kolmogorov57}, Hellinger distance $D_H$ \cite{hellinger09}, Pearson $\chi^2$ divergence $D_P$ \cite{daviega14,pearson00}, Neyman $\chi^2$ divergence $D_N$ \cite{daviega14,pearson00}, $\alpha$ divergence $D_\alpha$ \cite{dragomir00}, Vajda $\chi^\alpha$ divergence $D_V$ \cite{vajda72}, Jeffreys distance $D_J$ \cite{jeffreys46}, and triangular discrimination $D_\triangle$ \cite{topsoe99}, to name a few, and are defined as
\begin{subequations}
\begin{equation}
D_{KL}\left( P_1 \parallel P_2\right):=
\int_{\mathbb{R}}f_1(\xi) \ln \left[ \dfrac{f_1(\xi)}{f_2(\xi)} \right] d\xi,
\label{3a}
\end{equation}
\begin{equation}
D_{KL'}\left( P_1 \parallel P_2\right):=
\int_{\mathbb{R}}f_2(\xi) \ln \left[ \dfrac{f_2(\xi)}{f_1(\xi)} \right] d\xi
=: D_{KL}\left( P_2 \parallel P_1\right),
\label{3b}
\end{equation}
\begin{equation}
D_{TV}\left( P_1 \parallel P_2\right):=
\int_{\mathbb{R}}\left| f_1(\xi)-f_2(\xi)\right| d\xi,
\label{3c}
\end{equation}
\begin{equation}
D_{H}\left( P_1 \parallel P_2\right):=
\int_{\mathbb{R}}\left[ \sqrt{f_1(\xi)}-\sqrt{f_2(\xi)} \right]^2 d\xi,
\label{3d}
\end{equation}
\begin{equation}
D_{P}\left( P_1 \parallel P_2\right):=
\int_{\mathbb{R}}f_2(\xi) \left[\dfrac{f_1^2(\xi)}{f_2^2(\xi)} -1 \right] d\xi,
\label{3e}
\end{equation}
\begin{equation}
D_{N}\left( P_1 \parallel P_2\right):=
\int_{\mathbb{R}}f_1(\xi) \left[\dfrac{f_2^2(\xi)}{f_1^2(\xi)} -1 \right] d\xi
:= D_{P}\left( P_2 \parallel P_1\right),
\label{3f}
\end{equation}
\begin{equation}
D_{\alpha}\left( P_1 \parallel P_2\right):=
{\displaystyle \frac{4}{1-\alpha^2}} \left[ 1 -
\int_{\mathbb{R}} {f_1}^{(1-\alpha)/2}(\xi) {f_2}^{(1+\alpha)/2}(\xi) d\xi \right],
~\alpha \in \mathbb{R} \setminus \{\pm 1\},
\label{3g}
\end{equation}
\begin{equation}
D_V \left( P_1 \parallel P_2\right):=
\int_{\mathbb{R}}f_2^{1-\alpha}(\xi)\left| f_1(\xi) - f_2(\xi) \right|^\alpha d\xi,
~\alpha \ge 1,
\label{3h}
\end{equation}
\begin{equation}
D_{J}\left( P_1 \parallel P_2\right):=
\int_{\mathbb{R}}\left[ f_1(\xi)-f_2(\xi) \right] \ln \left[ \dfrac{f_1(\xi)}{f_2(\xi)} \right] d\xi,
\label{3i}
\end{equation}
\begin{equation}
D_{\triangle}\left( P_1 \parallel P_2\right):=
\int_{\mathbb{R}}\dfrac{\left[f_1(\xi)-f_2(\xi)\right]^2}{f_1(\xi)+f_2(\xi)} d\xi.
\label{3j}
\end{equation}
\end{subequations}
The definitions of some of these divergences, notably the two Kullback-Leibler and Pearson-Neyman $\chi^2$ divergences, are inverted when the $f$-divergence is defined by swapping $P_1$ and $P_2$ in (\ref{1}) or (\ref{2}). There are also many other information divergence measures that are not subsumed by the $f$-divergence measure.  See the paper by Kapur \cite{kapur84} or the book by Taneja \cite{taneja01}.  Nonetheless, any of the divergence measures from the class of $f$-divergences or others can be exploited for sensitivity analysis, as described in the following section.

\section{$f$-Sensitivity index}
Let $(\Omega,\mathcal{F},P)$ be a complete probability space, where
$\Omega$ is a sample space, $\mathcal{F}$ is a $\sigma$-field on
$\Omega$, and $P:\mathcal{F}\to[0,1]$ is a probability measure.
With $\mathcal{B}^{N}$ representing the Borel $\sigma$-field on
$\mathbb{R}^{N}$, $N\in\mathbb{N}$, consider an $\mathbb{R}^{N}$-valued absolutely continuous
random vector $\mathbf{X}:=(X_{1},\cdots,X_{N}):(\Omega,\mathcal{F})\to(\mathbb{R}^{N},\mathcal{B}^{N})$,
describing the statistical uncertainties in all system and input parameters
of a general stochastic problem. The probability law of $\mathbf{X}$, which may comprise independent or dependent random variables, is completely defined by its joint probability density function $f_{\mathbf{X}}:\mathbb{R}^{N}\to\mathbb{R}_{0}^{+}$.
Let $u$ be a non-empty subset of $\{1,\cdots,N\}$ with the complementary set
$-u:=\{1,\cdots,N\} \setminus u$ and cardinality $1\le|u|\le N$,
and let $\mathbf{X}_{u}=(X_{i_{1}},\cdots,X_{i_{|u|}})$,
$1\leq i_{1}<\cdots<i_{|u|}\leq N$, be a subvector of $\mathbf{X}$
with $\mathbf{X}_{-u}:=\mathbf{X}_{\{1,\cdots,N\} \setminus u}$ defining
its complementary subvector. Then, for a given $\emptyset\neq u\subseteq\{1,\cdots,N\}$,
the marginal density function of $\mathbf{X}_{u}$ is $f_{\mathbf{X}_u}(\mathbf{x}_{u}):=\int_{\mathbb{R}^{N-|u|}}f_{\mathbf{X}}(\mathbf{x})d\mathbf{x}_{-u}$.

\subsection{Definition}
Let $y(\mathbf{X}):=y(X_{1},\cdots,X_{N}$), a real-valued, continuous, measurable transformation on $(\Omega,\mathcal{F})$, define a general stochastic response of interest. Define $Y:=y(\mathbf{X})$ to be the associated output random variable. For global sensitivity analysis, suppose that the sensitivity of $Y$ with respect to a subset
$\mathbf{X}_u$, $\emptyset\neq u\subseteq\{1,\cdots,N\}$, of input variables $\mathbf{X}$ is desired. As shown by a few researchers for univariate cases only \cite{borgonovo14,daviega14}, such a multivariate sensitivity measure can be linked to the divergence between the unconditional and conditional probability measures of $Y$.  Denote by $P_Y$ and $P_{Y|\mathbf{X}_u}$ the probability measures and by $f_{Y}(\cdot)$ and $f_{Y|\mathbf{X}_{u}}(\cdot|\cdot)$ the probability density functions of random variables $Y$ and $Y|\mathbf{X}_u$, respectively, where $Y|\mathbf{X}_u$ stands for $Y$ conditional on $\mathbf{X}_u$, which is itself random.  Setting $P_1=P_Y$, $f_1=f_Y$, $P_2=P_{Y|\mathbf{X}_u}$, and $f_2=f_{Y|\mathbf{X}_u}$ in (\ref{2}), the $f$-divergence becomes
\begin{equation}
D_f\left( P_Y \parallel P_{Y|\mathbf{X}_u}\right):=
\int_{\mathbb{R}}f\left(\dfrac{f_{Y}(\xi)}
{f_{Y|\mathbf{X}_{u}}(\xi|\mathbf{x}_u)}\right) f_{Y|\mathbf{X}_{u}}(\xi|\mathbf{x}_u)d\xi.
\label{4}
\end{equation}
As explained in the preceding section, $D_f(P_Y \parallel P_{Y|\mathbf{X}_u})$ characterizes the discrimination between $P_Y$ and $P_{Y|\mathbf{X}_u}$, but it is random because $\mathbf{X}_u$ is random.

\begin{definition}
A general multivariate $f$-sensitivity index of an output random variable $Y$ for a subset $\mathbf{X}_{u}$, $\emptyset\neq u\subseteq\{1,\cdots,N\}$, of input random variables $\mathbf{X}:=(X_1,\cdots,X_N)$, denoted by $H_{u,f}$, is defined as the expected value of the $f$-divergence from $P_Y$ to $P_{Y|\mathbf{X}_u}$, that is,
\begin{equation}
H_{u,f}:=\mathbb{E}_{\mathbf{X}_u} \left[ D_f\left( P_Y \parallel P_{Y|\mathbf{X}_u}\right) \right],
\label{4b}
\end{equation}
where $\mathbb{E}_{\mathbf{X}_u}$ is the expectation operator with respect to the probability measure of $\mathbf{X}_{u}$.
\end{definition}

From the definition of the expectation operator, the $f$-sensitivity index
\begin{equation}
\begin{array}{rcl}
H_{u,f} & = &
\int_{\mathbb{R}^{|u|}}
 D_f\left( P_Y \parallel P_{Y|\mathbf{X}_u=\mathbf{x}_u}\right)
f_{\mathbf{X}_u}(\mathbf{x}_u)d{\mathbf{x}_u} \\
        & = &
\int_{\mathbb{R}^{|u|}\times\mathbb{R}}
f\left(
\dfrac{f_{Y}(\xi)}{f_{Y|\mathbf{X}_u}(\xi|\mathbf{x}_u)}
\right)
f_{Y|\mathbf{X}_u}(\xi|\mathbf{x}_u)f_{\mathbf{X}_u}(\mathbf{x}_u)d{\mathbf{x}_u}d\xi \\

        & =  &
\int_{\mathbb{R}^{|u|}\times\mathbb{R}}
f\left(
\dfrac{f_{Y}(\xi)f_{\mathbf{X}_u}(\mathbf{x}_u)}
{f_{\mathbf{X}_u,Y}(\mathbf{x}_u,\xi)}
\right)
f_{\mathbf{X}_{u},Y}(\mathbf{x}_u,\xi)d{\mathbf{x}_u}d\xi,
\end{array}
\label{5}
\end{equation}
where $P_{Y|\mathbf{X}_u=\mathbf{x}_u}$ and $f_{Y|\mathbf{X}_u}(\xi|\mathbf{x}_u)$ are the probability measure and probability density function, respectively, of $Y$ conditional on $\mathbf{X}_u=\mathbf{x}_u$, and $f_{\mathbf{X}_{u},Y}(\mathbf{x}_u,\xi)$ is the joint probability density function of $(\mathbf{X}_u,Y)$.  The last equality in (\ref{5}) is formed by the recognition that $f_{\mathbf{X}_u,Y}(\mathbf{x}_u,\xi)=f_{Y|\mathbf{X}_u}(\xi|\mathbf{x}_u)f_{\mathbf{X}_u}(\mathbf{x}_u)$ and is useful for calculating the sensitivity index, to be discussed in section 4.

For variance-based sensitivity analysis entailing independent random variables, there exists a well-known importance measure, namely, the Sobol index.  One way to explain the Sobol index is the analysis-of-variance (ANOVA) decomposition of a square-integrable function $y$, expressed by the compact form \cite{rahman14,sobol01,sobol93}
\begin{subequations}
\begin{align}
y(\mathbf{X})         & = {\displaystyle \sum_{u\subseteq\{1,\cdots,N\}}y_{u}(\mathbf{X}_{u})},
\label{5a1}                \\
y_{\emptyset}         & = \int_{\mathbb{R}^{N}}y(\mathbf{x})\prod_{i=1}^{N}{\displaystyle {\displaystyle {\textstyle f_{X_i}(x_{i})}}dx_{i}},
\label{5a2}                \\
y_{u}(\mathbf{X}_{u}) & = {\displaystyle \int_{\mathbb{R}^{N-|u|}}y(\mathbf{X}_{u},\mathbf{x}_{-u})}\prod_{i=1,i\notin u}^{N}{\displaystyle {\displaystyle {\textstyle f_{X_i}(x_{i})}}dx_{i}}-{\displaystyle \sum_{v\subset u}}y_{v}(\mathbf{X}_{v}),
\label{5a3}
\end{align}
\label{5a}
\end{subequations}
\!\!which is a finite, hierarchical expansion in terms of its input variables with increasing dimensions.  Here, $y_{u}$ is a $|u|$-variate component function describing a constant or the interactive effect of $\mathbf{X}_{u}$ on $y$ when $|u|=0$ or $|u|>0$. The summation in (\ref{5a1}) comprises $2^{N}$ component functions, with each function depending on a group of variables indexed by a particular subset of $\{1,\cdots,N\}$, including the empty set $\emptyset$.  Applying the expectation operator $\mathbb{E}_\mathbf{X}$ on $y(\mathbf{X})$ and its square from (\ref{5a1}) and recognizing the \emph{zero}-mean and orthogonal properties of $y_{u}(\mathbf{X}_{u})$,
$\emptyset\ne u\subseteq\{1,\cdots,N\}$, in (\ref{5a3}) \cite{rahman14}, the variance
\begin{equation}
\sigma^2:=\mathbb{E}_\mathbf{X}\left[Y-\mathbb{E}_\mathbf{X}[Y]\right]^2=
\sum_{\emptyset\ne u\subseteq\{1,\cdots,N\}} {\sigma_u}^2
\label{5b}
\end{equation}
of $Y$ splits into partial variances
\begin{equation}
{\sigma_u}^2:=\mathbb{E}_\mathbf{X}\left[y_{u}^{2}(\mathbf{X}_{u})\right]=
\mathbb{E}_{\mathbf{X}_{u}}\left[y_{u}^{2}(\mathbf{X}_{u})\right],
~\emptyset\ne u\subseteq\{1,\cdots,N\},
\label{5c}
\end{equation}
of all non-constant ANOVA component functions. Henceforth, the Sobol sensitivity index of $Y$ for a subset of variables $\mathbf{X}_u$ is defined as \cite{sobol01}
\begin{equation}
S_u := \dfrac{\sigma_u^2}{\sigma^2},
\label{5d}
\end{equation}
provided that $0 < \sigma^2 < \infty$.  The Sobol index is bounded between 0 and 1 and represents the fraction of the variance of $Y$ contributed by the $|u|$-variate interaction of input variables $\mathbf{X}_u$.  There exist $2^N-1$ such indices, adding up to $\sum_{\emptyset\ne u\subseteq\{1,\cdots,N\}} S_u = 1$.

Does the $f$-sensitivity index provide a more useful insight than the existing variance-based Sobol index into the importance of input variables?  To answer this question, consider a purely additive function $y(\mathbf{X})=a+\sum_{i=1}^N X_i$, where $a \in \mathbb{R}$ is an arbitrary real-valued constant and the input random variables $X_i$, $i=1,\cdots,N$, have \emph{zero} means and identical variances $\mathbb{E}_{X_i}[X_i^2]=s^2$, $0 < s^2 < \infty$, but otherwise follow independent and arbitrary probability distributions.  Then, from (\ref{5a}) through (\ref{5d}), (1) the ANOVA component functions $y_\emptyset=a$, $y_{\{i\}}(X_i)=X_i$, $i=1,\cdots,N$, and $y_{u}(\mathbf{X}_u)=0$ for $2\le |u| \le N$; (2) the variances $\sigma^2=Ns^2$, $\sigma_{\{i\}}^2=s^2$, $i=1,\cdots,N$, and $\sigma_{u}^2=0$ for $2\le |u| \le N$; and (3) the Sobol indices $S_{\{i\}}=1/N$, $i=1,\cdots,N$, and $S_u=0$ for $2 \le |u| \le N$.  As all univariate Sobol indices are the same, so are the contributions of input variables to the variance of $Y$.  Hence, according to the Sobol index, all input variables are equally important, regardless of their probability distributions. This is unrealistic, but possible because the variance is just a moment and provides only a partial description of the uncertainty of an output variable.  In contrast, the $f$-sensitivity indices will vary depending on the choice of the input density functions, therefore, providing a more rational measure of the influence of input variables.

\subsection{Fundamental properties}
It is important to derive and emphasize the fundamental properties of the $f$-sensitivity index $H_{u,f}$ inherited from the $f$-divergence measure. The properties, including a few important inequalities, are described in conjunction with six propositions as follows.

\begin{proposition}[Non-negativity]
The $f$-sensitivity index $H_{u,f}$ of $Y$ for $\mathbf{X}_u$, $\emptyset\neq u\subseteq\{1,\cdots,N\}$, is non-negative and vanishes when $Y$ and $\mathbf{X}_u$ are statistically independent.
\label{p1}
\end{proposition}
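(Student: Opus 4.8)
The plan is to obtain both assertions directly from two structural facts established earlier: the non-negativity-and-reflexivity property of the $f$-divergence (property 1 of the general properties) and the normalization $f(1)=0$ built into the generating function.

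For the non-negativity claim, I would start from the definition (\ref{4b}), $H_{u,f}=\mathbb{E}_{\mathbf{X}_u}[D_f(P_Y \parallel P_{Y|\mathbf{X}_u})]$. For each fixed realization $\mathbf{x}_u$, the quantity $D_f(P_Y \parallel P_{Y|\mathbf{X}_u=\mathbf{x}_u})$ is a genuine $f$-divergence between the two probability measures $P_Y$ and $P_{Y|\mathbf{X}_u=\mathbf{x}_u}$, hence non-negative. Since $H_{u,f}$ is the expectation of this pointwise non-negative integrand --- equivalently, its integral against the probability density $f_{\mathbf{X}_u}$ --- it is itself non-negative, and nothing beyond monotonicity of the integral is needed.

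For the vanishing under independence, I would invoke the characterization of independence of $Y$ and $\mathbf{X}_u$ as the almost-everywhere identity $f_{Y|\mathbf{X}_u}(\xi|\mathbf{x}_u)=f_Y(\xi)$ of the conditional and marginal densities. Feeding this into the argument of $f$ in (\ref{4}) collapses the ratio $f_Y(\xi)/f_{Y|\mathbf{X}_u}(\xi|\mathbf{x}_u)$ to unity, and the normalization $f(1)=0$ then annihilates the integrand, so that $D_f(P_Y \parallel P_{Y|\mathbf{X}_u=\mathbf{x}_u})=0$ for $f_{\mathbf{X}_u}$-almost every $\mathbf{x}_u$ and therefore $H_{u,f}=0$. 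Equivalently, independence forces $P_{Y|\mathbf{X}_u=\mathbf{x}_u}=P_Y$, whereupon the reflexivity half of property 1 delivers $D_f(P_Y \parallel P_Y)=0$ at once.

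The derivation is essentially immediate once these ingredients are assembled, so I do not expect a substantive obstacle. The only step warranting care is the measure-theoretic bookkeeping: confirming that statistical independence is equivalent to the pointwise density identity holding almost everywhere, and that the outer integration over $\mathbf{x}_u$ and the inner integration over $\xi$ in (\ref{5}) may be interchanged. Because the integrand is non-negative, Tonelli's theorem authorizes this interchange with no integrability precondition, so the proposition follows as a direct corollary of the general $f$-divergence properties.
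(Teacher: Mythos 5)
Your proof is correct and takes essentially the same route as the paper: non-negativity follows from the pointwise non-negativity of $D_f\left( P_Y \parallel P_{Y|\mathbf{X}_u=\mathbf{x}_u}\right)$ integrated against the density $f_{\mathbf{X}_u}$, and the vanishing under independence follows from $P_{Y|\mathbf{X}_u=\mathbf{x}_u}=P_Y$ together with reflexivity of the $f$-divergence, which is exactly the paper's argument (your density-ratio variant using $f(1)=0$ is merely that same fact unwound one level). Your almost-everywhere phrasing and the Tonelli remark are slightly more careful measure-theoretically than the paper's ``for any $\mathbf{x}_u$,'' but they do not constitute a different approach.
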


\begin{proof}
Since $D_f(P_Y \parallel P_{Y|\mathbf{X}_u=\mathbf{x}_u}) \ge 0$ by virtue of the non-negativity property of the $f$-divergence and $f_{\mathbf{X}_u}(\mathbf{x}_u)\ge 0$ for any $\mathbf{x}_u \in \mathbb{R}^{|u|}$, the first line of (\ref{5}) yields
\begin{equation*}
H_{u,f} = \int_{\mathbb{R}^{|u|}}
D_f\left( P_Y \parallel P_{Y|\mathbf{X}_u=\mathbf{x}_u}\right)
f_{\mathbf{X}_u}(\mathbf{x}_u)d{\mathbf{x}_u} \ge 0,
\end{equation*}
proving the first part of the proposition.  If $Y$ and $\mathbf{X}_u$ are statistically independent, then $P_Y = P_{Y|\mathbf{X}_u=\mathbf{x}_u}$ for any $\mathbf{x}_u \in \mathbb{R}^N$, resulting in $D_f( P_Y \parallel P_{Y|\mathbf{X}_u=\mathbf{x}_u})=D_f( P_Y \parallel P_Y)=0$, owing to the reflexivity property or the range of values (left equality) of the $f$-divergence. In that case,
\begin{equation*}
H_{u,f} = \int_{\mathbb{R}^{|u|}}
D_f\left( P_Y \parallel P_{Y|\mathbf{X}_u=\mathbf{x}_u}\right)
f_{\mathbf{X}_u}(\mathbf{x}_u)d{\mathbf{x}_u} =
\int_{\mathbb{R}^{|u|}}
D_f\left( P_Y \parallel P_Y\right)
f_{\mathbf{X}_u}(\mathbf{x}_u)d{\mathbf{x}_u} = 0,
\end{equation*}
proving the second part of the proposition.
\end{proof}

\begin{proposition}[Range of values]
The range of values of $H_{u,f}$ is
\begin{equation*}
0 \le H_{u,f} \le f(0)+f^*(0),
\end{equation*}
where $f(0)=\lim_{t \to 0_+} f(t)$ and $f^*(0)=\lim_{t \to 0_+} tf(1/t)=\lim_{t \to \infty} f(t)/t$.
\label{p2}
\end{proposition}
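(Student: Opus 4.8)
The plan is to lift the ``Range of Values'' property of the $f$-divergence, already recorded in section~2, from the conditional divergence $D_f(P_Y \parallel P_{Y|\mathbf{X}_u=\mathbf{x}_u})$ to its $\mathbf{X}_u$-average $H_{u,f}$ by monotonicity of the integral against the probability density $f_{\mathbf{X}_u}$. Concretely, I would start from the first line of (\ref{5}),
\[
H_{u,f} = \int_{\mathbb{R}^{|u|}} D_f\left( P_Y \parallel P_{Y|\mathbf{X}_u=\mathbf{x}_u}\right) f_{\mathbf{X}_u}(\mathbf{x}_u) d\mathbf{x}_u,
\]
which exhibits $H_{u,f}$ as the expectation of a non-negative quantity that is also pointwise bounded above, and then estimate the integrand uniformly in $\mathbf{x}_u$.

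The lower bound $0 \le H_{u,f}$ is already supplied by Proposition~\ref{p1}, so nothing new is needed there; alternatively it follows at once from $D_f \ge 0$ and $f_{\mathbf{X}_u} \ge 0$. For the upper bound, I invoke the range-of-values property of the $f$-divergence termwise: for every fixed $\mathbf{x}_u \in \mathbb{R}^{|u|}$ for which the conditional measure is defined, $D_f(P_Y \parallel P_{Y|\mathbf{X}_u=\mathbf{x}_u}) \le f(0)+f^*(0)$. Substituting this constant bound into the integrand and pulling it outside the integral gives
\[
H_{u,f} \le \bigl[f(0)+f^*(0)\bigr] \int_{\mathbb{R}^{|u|}} f_{\mathbf{X}_u}(\mathbf{x}_u) d\mathbf{x}_u = f(0)+f^*(0),
\]
since $f_{\mathbf{X}_u}$ is a genuine marginal density and hence integrates to unity over $\mathbb{R}^{|u|}$.

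I do not anticipate a serious obstacle, as the argument reduces to a one-line application of monotonicity of the integral; the only points demanding care are bookkeeping. First, if $f(0)+f^*(0)=+\infty$ the upper bound is vacuous, so the real content is the case $f(0)+f^*(0)<\infty$, where the pointwise bound is a finite constant and the interchange is plainly legitimate. Second, the pointwise inequality need only hold for $\mathbf{x}_u$ in a set of full $f_{\mathbf{X}_u}$-measure, which suffices to preserve the bound after integration. With these caveats in place, the two-sided estimate follows directly.
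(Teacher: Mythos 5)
Your proposal is correct and follows essentially the same route as the paper: the lower bound is inherited from Proposition \ref{p1}, and the upper bound is obtained by applying the range-of-values property of the $f$-divergence pointwise and taking the expectation with respect to $\mathbf{X}_u$, exactly as in the paper's use of (\ref{4b}). Your added remarks on the vacuous case $f(0)+f^*(0)=\infty$ and on needing the pointwise bound only on a set of full $f_{\mathbf{X}_u}$-measure are sensible refinements the paper leaves implicit, but they do not change the argument.
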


\begin{proof}
See the proof of Proposition \ref{p1} for the left inequality.  The right inequality is derived from the largest value of $D_f( P_Y \parallel P_{Y|\mathbf{X}_u})$, which is $f(0)+f^*(0)$, according to the range of values (right equality) of the $f$-divergence. Therefore, (\ref{4b}) yields
\begin{equation*}
H_{u,f} := \mathbb{E}_{\mathbf{X}_u} \left[ D_f\left( P_Y \parallel P_{Y|\mathbf{X}_u}\right) \right]
\le \mathbb{E}_{\mathbf{X}_u} \left[ f(0)+f^*(0) \right] = f(0)+f^*(0),
\end{equation*}
completing the proof.
\end{proof}

From Proposition \ref{p2}, $H_{u,f}$ has a sharp lower bound, which is \emph{zero} since $f(1)=0$.  In contrast, $H_{u,f}$ may or may not have an upper bound, depending on whether $f(0)+f^*(0)$ is finite or infinite.  If there is an upper bound, then the largest value $f(0)+f^*(0)$ is a sharp upper bound, and hence can be used to scale $H_{u,f}$ to vary between 0 and 1. For instance, when $f=|t-1|$, the result is the well-known variational distance measure $D_{TV}(P_Y \parallel P_{Y|\mathbf{X}_u})$ and the upper bound of the associated sensitivity index $H_{u,TV}$ (say) is $f(0)+f^*(0)=1+1=2$. When $f=t \ln t$ or $f=-\ln t$, then $f(0)+f^*(0)=\infty$, meaning that the sensitivity index  $H_{u,KL}$ (say) or $H_{u,KL'}$ (say), derived from the Kullback-Leibler divergence measure $D_{KL}(P_Y \parallel P_{Y|\mathbf{X}_u})$ or
$D_{KL'}(P_Y \parallel P_{Y|\mathbf{X}_u})$, has no upper bound.  No scaling is possible in such a case.

\begin{proposition}[Importance of all input variables]
The $f$-sensitivity index $H_{\{1,\cdots,N\},f}$ of $Y$ for all input variables $\mathbf{X}=(X_1,\cdots,X_N)$ is
\begin{equation*}
H_{\{1,\cdots,N\},f} =  f(0)+f^*(0),
\end{equation*}
where $f(0)=\lim_{t \to 0_+} f(t)$ and $f^*(0)=\lim_{t \to 0_+} tf(1/t)=\lim_{t \to \infty} f(t)/t$.
\label{p3}
\end{proposition}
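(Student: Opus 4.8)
The plan is to exploit the fact that conditioning on the \emph{entire} input vector $\mathbf{X}=(X_1,\cdots,X_N)$ renders $Y=y(\mathbf{X})$ deterministic, which pushes the inner $f$-divergence to the extreme value identified in Proposition \ref{p2}. First I would observe that, given $\mathbf{X}=\mathbf{x}$, the response equals the single value $y(\mathbf{x})$ with conditional probability one, so the conditional law $P_{Y|\mathbf{X}=\mathbf{x}}$ degenerates to the Dirac point mass concentrated at $\{y(\mathbf{x})\}$. In particular it possesses no Lebesgue density, so the density-based expression (\ref{5}) no longer applies and I must work with the general, measure-theoretic divergence (\ref{1}).

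Next I would compare this degenerate conditional measure with the unconditional law $P_Y$, which by assumption is absolutely continuous with density $f_Y$ and hence atomless. Taking the singleton $A=\{y(\mathbf{x})\}$ as a separating set gives $P_{Y|\mathbf{X}=\mathbf{x}}(A)=1$ but $P_Y(A)=0$, and conversely $P_{Y|\mathbf{X}=\mathbf{x}}(A^{c})=0$ but $P_Y(A^{c})=1$; therefore $P_Y \perp P_{Y|\mathbf{X}=\mathbf{x}}$ for every realization $\mathbf{x}$. With mutual singularity established, the right equality of the range-of-values property of the $f$-divergence --- the same fact already invoked in Proposition \ref{p2} --- gives $D_f\left(P_Y \parallel P_{Y|\mathbf{X}=\mathbf{x}}\right)=f(0)+f^*(0)$ for every $\mathbf{x}$. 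Substituting this constant into the definition (\ref{4b}) and using that the expectation of a constant equals that constant then yields
\[
H_{\{1,\cdots,N\},f}=\mathbb{E}_{\mathbf{X}}\left[f(0)+f^*(0)\right]=f(0)+f^*(0),
\]
as claimed.

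The hard part will be the careful justification of the mutual-singularity step, since the conditional and unconditional laws no longer share the Lebesgue dominating measure used in the density formulation (\ref{2}); one should fix a $\sigma$-finite $\mu$ dominating both the point mass and the continuous law and verify that the ``attained when $f(0)+f^*(0)<\infty$'' clause of the range-of-values property extends seamlessly to the case $f(0)+f^*(0)=\infty$, where the equality is read as both sides being infinite (so that, e.g., $H_{\{1,\cdots,N\},KL}=\infty$ consistently). Once orthogonality is secured, the remainder is a direct application of Proposition \ref{p2}, with no further computation required.
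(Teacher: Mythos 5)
Your proposal is correct and follows essentially the same route as the paper's own proof: identify $P_{Y|\mathbf{X}=\mathbf{x}}$ as a Dirac measure, establish mutual singularity with the atomless unconditional law $P_Y$ via the singleton $\{y(\mathbf{x})\}$, invoke the right equality of the range-of-values property to obtain $D_f\left(P_Y \parallel P_{Y|\mathbf{X}}\right)=f(0)+f^*(0)$, and take the expectation of a constant. Your added care about fixing a common dominating measure and about reading the equality as ``both sides infinite'' when $f(0)+f^*(0)=\infty$ refines details the paper leaves implicit, but it is not a different argument.
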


\begin{proof}
The probability measure $P_{Y|\mathbf{X}=\mathbf{x}}$ is a dirac measure, representing an almost sure outcome $\xi=y(\mathbf{x})$, where $\mathbf{x} \in \mathbb{R}^N$ and $\xi \in \mathbb{R}$.  Decompose $\mathbb{R}$ into two disjoint subsets  $\mathbb{R} \setminus \{\xi\}$ and $\{\xi\}$ and observe that
\begin{equation*}
P_{Y|\mathbf{X}=\mathbf{x}}\left(\mathbb{R} \setminus \{\xi\}\right) =  P_{Y}\left( \{\xi\} \right) = 0.
\end{equation*}
Therefore, the probability measures $P_Y$ and $P_{Y|\mathbf{X}=\mathbf{x}}$ are mutually singular (orthogonal), that is, $P_Y \perp P_{Y|\mathbf{X}=\mathbf{x}}$.  Consequently, $D_f( P_Y \parallel P_{Y|\mathbf{X}})=f(0)+f^*(0)$, according to the range of values (right equality) of the $f$-divergence. Finally, for $u=\{1,\cdots,N\}$, (\ref{4b}) yields
\begin{equation*}
H_{\{1,\cdots,N\},f} =
\mathbb{E}_{\mathbf{X}} \left[ D_f\left( P_Y \parallel P_{Y|\mathbf{X}}\right) \right] =
\mathbb{E}_{\mathbf{X}} \left[ f(0)+f^*(0) \right] = f(0)+f^*(0).
\end{equation*}
\end{proof}

For the special case of $f=|t-1|$, the index derived from the total variational distance $H_{\{1,\cdots,N\},{TV}}=2$. Therefore, when normalized, $H_{\{1,\cdots,N\},{TV}}/2=1$, which is the same value reported by Borgonovo \cite{borgonovo07}.

\begin{proposition}[Importance for an independent subset]
Let $\emptyset\neq u, v\subset\{1,\cdots,N\}$.  If $Y$ and $\mathbf{X}_v$ are statistically independent, then
\begin{equation*}
H_{u \cup v,f} = H_{u \setminus v,f}.
\end{equation*}
\label{p4}
In addition, if $u$ and $v$ are disjoint subsets, that is, $u \cap v = \emptyset$, then
\begin{equation*}
H_{u \cup v,f} = H_{u,f}.
\end{equation*}
\label{p4b}
\end{proposition}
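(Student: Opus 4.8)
The plan is to reduce the claim to a single statement about the conditional law of $Y$. First I would record the set identity $u\cup v=(u\setminus v)\cup v$ together with $(u\setminus v)\cap v=\emptyset$, so that this is a disjoint decomposition and conditioning on $\mathbf{X}_{u\cup v}$ is the same as conditioning on the pair $(\mathbf{X}_{u\setminus v},\mathbf{X}_v)$. Accordingly I write $\mathbf{x}_{u\cup v}=(\mathbf{x}_{u\setminus v},\mathbf{x}_v)$ and record the elementary marginalization $\int_{\mathbb{R}^{|v|}}f_{\mathbf{X}_{u\cup v}}(\mathbf{x}_{u\cup v})\,d\mathbf{x}_v=f_{\mathbf{X}_{u\setminus v}}(\mathbf{x}_{u\setminus v})$, which is what will eventually collapse the extra $|v|$ integrations.

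The heart of the argument is the claim that the conditional density does not see the extra coordinates, namely
\[
f_{Y|\mathbf{X}_{u\cup v}}(\xi\mid\mathbf{x}_{u\cup v})=f_{Y|\mathbf{X}_{u\setminus v}}(\xi\mid\mathbf{x}_{u\setminus v})
\]
for $\xi\in\mathbb{R}$ and ($f_{\mathbf{X}_{u\cup v}}$-almost) every $\mathbf{x}_{u\cup v}$. Granting this, I substitute into the second line of (\ref{5}), written with $u\cup v$ in place of $u$: the argument $f_Y(\xi)/f_{Y|\mathbf{X}_{u\cup v}}(\xi\mid\mathbf{x}_{u\cup v})$ of $f$ and the accompanying weight $f_{Y|\mathbf{X}_{u\cup v}}(\xi\mid\mathbf{x}_{u\cup v})$ then depend only on $(\mathbf{x}_{u\setminus v},\xi)$, so by Fubini the $\mathbf{x}_v$-integration acts solely on $f_{\mathbf{X}_{u\cup v}}(\mathbf{x}_{u\cup v})$ and returns $f_{\mathbf{X}_{u\setminus v}}(\mathbf{x}_{u\setminus v})$. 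What is left is exactly the representation (\ref{5}) for the index set $u\setminus v$, which yields $H_{u\cup v,f}=H_{u\setminus v,f}$.

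I expect the main obstacle to be precisely the displayed conditional-density collapse, and I would be careful here. That identity is the conditional independence $Y\perp\mathbf{X}_v\mid\mathbf{X}_{u\setminus v}$, whereas the hypothesis directly supplies only the marginal independence $P_{Y,\mathbf{X}_v}=P_Y\otimes P_{\mathbf{X}_v}$, and marginal independence does not in general force $f_{Y|\mathbf{X}_{u\cup v}}$ to be free of $\mathbf{x}_v$. I therefore anticipate that this step either reads the independence hypothesis in the conditional sense or exploits the structure available here, that $Y=y(\mathbf{X})$ is a function not involving the $\mathbf{X}_v$-coordinates under an independent-input model. I would also note one convenient simplification: once one knows that $f_{Y|\mathbf{X}_{u\cup v}}$ is free of $\mathbf{x}_v$, integrating the corresponding conditional factorization against $f_{\mathbf{X}_v\mid\mathbf{X}_{u\setminus v}}$ automatically identifies its common value with $f_{Y|\mathbf{X}_{u\setminus v}}$, so only the $\mathbf{x}_v$-independence, not the equality itself, needs to be established before the routine Fubini reduction above applies.

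The second assertion is then an immediate specialization: when $u$ and $v$ are disjoint one has $u\setminus v=u$, so the first part gives $H_{u\cup v,f}=H_{u\setminus v,f}=H_{u,f}$, and no separate argument is required.
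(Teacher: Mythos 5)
Your argument is, in substance, the paper's own proof: the same disjoint decomposition $u\cup v=(u\setminus v)\cup v$ with $(u\setminus v)\cap v=\emptyset$, the same collapse of the conditional law of $Y$ to one free of $\mathbf{x}_v$, the same marginalization $\int_{\mathbb{R}^{|v|}}f_{\mathbf{X}_{u\cup v}}(\mathbf{x}_{u\cup v})\,d\mathbf{x}_v=f_{\mathbf{X}_{u\setminus v}}(\mathbf{x}_{u\setminus v})$ to recover the representation (\ref{5}) for the index set $u\setminus v$, and the same one-line specialization $u\setminus v=u$ for the disjoint case. The only cosmetic difference is that the paper phrases the collapse at the level of divergences, asserting $D_f(P_Y\parallel P_{Y|\mathbf{X}_{u\cup v}=\mathbf{x}_{u\cup v}})=D_f(P_Y\parallel P_{Y|\mathbf{X}_{u\setminus v}=\mathbf{x}_{u\setminus v}})$ and working from the first line of (\ref{5}), whereas you phrase it at the level of conditional densities and work from the second line.

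The caveat you flag is not a defect of your write-up; it applies verbatim to the paper, whose proof deduces $P_{Y|\mathbf{X}_{u\cup v}=\mathbf{x}_{u\cup v}}=P_{Y|\mathbf{X}_{u\setminus v}=\mathbf{x}_{u\setminus v}}$ directly from ``$Y$ is independent of $\mathbf{X}_v$.'' As you observe, that equality is the conditional independence $Y\perp\mathbf{X}_v\mid\mathbf{X}_{u\setminus v}$, which marginal independence does not imply. Indeed the proposition fails under the literal marginal reading: take $N=2$, $X_1,X_2$ i.i.d.\ uniform on $[0,1]$, $Y=(X_1+X_2)\bmod 1$, $u=\{1\}$, $v=\{2\}$. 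Then $Y$ is independent of $X_2$ (and of $X_1$), so $H_{\{1\},f}=0$ by Proposition \ref{p1}, yet $P_{Y|\mathbf{X}=\mathbf{x}}$ is a Dirac measure, so the mutual-singularity argument of Proposition \ref{p3} gives $H_{\{1,2\},f}=f(0)+f^*(0)>0$, contradicting $H_{u\cup v,f}=H_{u\setminus v,f}$. The paper's intended reading is visible in the remark immediately after its proof (``\dots if $y$ does not depend on $X_j$''): the hypothesis is meant in the stronger sense you anticipate --- $y$ has no functional dependence on $\mathbf{X}_v$ and $\mathbf{X}_v$ carries no information about the remaining inputs, equivalently the conditional independence you isolate --- under which your Fubini reduction, and the paper's, is complete. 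Your closing observation is also correct: once $f_{Y|\mathbf{X}_{u\cup v}}$ is known to be free of $\mathbf{x}_v$, integrating the conditional factorization against $f_{\mathbf{X}_v|\mathbf{X}_{u\setminus v}}$ identifies its common value with $f_{Y|\mathbf{X}_{u\setminus v}}$ automatically.
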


\begin{proof}
For any $\emptyset\neq u, v\subset\{1,\cdots,N\}$, observe that $u \cup v=(u \setminus v) \cup v$ and $(u \setminus v)\cap v = \emptyset$.  Since $Y$ is independent of $\mathbf{X}_v$, the probability measures $P_{Y|\mathbf{X}_{u \cup v}=\mathbf{x}_{u \cup v}}$ and $P_{Y|\mathbf{X}_{u \setminus v}=\mathbf{x}_{u\setminus v}}$ are the same, yielding $D_f( P_Y \parallel P_{Y|\mathbf{X}_{u \cup v}=\mathbf{x}_{u \cup v}})=D_f( P_Y \parallel P_{Y|\mathbf{X}_{u \setminus v}=\mathbf{x}_{u\setminus v}})$.  Applying this condition to the expression of $H_{u \cup v,f}$ --- the $f$-sensitivity index of $Y$ for $\mathbf{X}_{u \cup v}$ --- in the first line of (\ref{5}) and noting $d\mathbf{x}_{u \cup v}=d{\mathbf{x}_{v}}d{\mathbf{x}_{u \setminus v}}$ results in
\begin{equation*}
\begin{array}{rcl}
H_{u \cup v,f} & = &
\int_{\mathbb{R}^{|u \cup v|}}
D_f\left( P_Y \parallel P_{Y|\mathbf{X}_{u \cup v}=\mathbf{x}_{u \cup v}}\right)
f_{\mathbf{X}_{u \cup v}}(\mathbf{x}_{u \cup v})d{\mathbf{x}_{u \cup v}} \\
               & =  &
\int_{\mathbb{R}^{|u \setminus v|}}
D_f\left( P_Y \parallel P_{Y|\mathbf{X}_{u \setminus v}=\mathbf{x}_{u \setminus v}}\right)
\left(
\int_{\mathbb{R}^{|v|}}
f_{\mathbf{X}_{(u \setminus v) \cup v}}(\mathbf{x}_{(u \setminus v)\cup v})
d{\mathbf{x}_{v}}
\right)
d{\mathbf{x}_{u \setminus v}}  \\
                & = &
\int_{\mathbb{R}^{|u \setminus v|}}
D_f\left( P_Y \parallel P_{Y|\mathbf{X}_{u \setminus v}=\mathbf{x}_{u \setminus v}}\right)
f_{\mathbf{X}_{u \setminus v}}(\mathbf{x}_{u \setminus v})
d{\mathbf{x}_{u \setminus v}}  \\
                & = &
H_{u \setminus v,f},
\end{array}
\end{equation*}
proving the first part of the proposition. Here, the second equality is obtained by recognizing that
$D_f( P_Y \parallel P_{Y|\mathbf{X}_{u \setminus v}=\mathbf{x}_{u \setminus v}})$ does not depend on $\mathbf{x}_v$ and $f_{\mathbf{X}_{u \cup v}}(\mathbf{x}_{u \cup v})=f_{\mathbf{X}_{(u \setminus v) \cup v}}(\mathbf{x}_{(u \setminus v)\cup v})$.  The third equality is attained by integrating out $f_{\mathbf{X}_{(u \setminus v) \cup v}}(\mathbf{x}_{(u \setminus v)\cup v})$  with respect to $\mathbf{x}_v$ on $\mathbb{R}^{|v|}$, resulting in $f_{\mathbf{X}_{u \setminus v}}(\mathbf{x}_{u \setminus v}):=\int_{\mathbb{R}^{|v|}}f_{\mathbf{X}_{(u \setminus v) \cup v}}(\mathbf{x}_{(u \setminus v)\cup v})d{\mathbf{x}_v}$.  The second part of the proposition results from the reduction, $H_{u \setminus v,f}=H_{u,f}$, when $u \cap v = \emptyset$.
\end{proof}

As a special case, consider $u=\{i\}$ and $v=\{j\}$, where $i,j = 1,\cdots,N$, $i \ne j$.  Then, according to Proposition \ref{p4}, $H_{\{i,j\},f}=H_{\{i\},f}$, meaning that there is no contribution of $X_j$ to the sensitivity of $Y$ for $(X_i,X_j)$ if $y$ does not depend on $X_j$.

\begin{proposition}[Invariance]
The $f$-sensitivity index $H_{u,f}$ of $Y$ for $\mathbf{X}_u$, $\emptyset\neq u\subseteq\{1,\cdots,N\}$, is invariant under smooth and uniquely invertible transformations (diffeomorphisms) of $Y$ and $\mathbf{X}_u$.
\label{p5}
\end{proposition}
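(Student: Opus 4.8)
The plan is to exploit the third line of (\ref{5}), which recasts the $f$-sensitivity index as a single $f$-divergence on the product space $\mathbb{R}^{|u|}\times\mathbb{R}$, namely the divergence of the joint law of $(\mathbf{X}_u,Y)$ from the product $P_{\mathbf{X}_u}\otimes P_Y$ of its marginals. Written this way, $H_{u,f}$ is an integral whose integrand depends on the densities only through the density ratio $r(\mathbf{x}_u,\xi):=f_{Y}(\xi)f_{\mathbf{X}_u}(\mathbf{x}_u)/f_{\mathbf{X}_u,Y}(\mathbf{x}_u,\xi)$ and the base mass $f_{\mathbf{X}_u,Y}(\mathbf{x}_u,\xi)\,d\mathbf{x}_u\,d\xi$. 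The key point is that $r$ is a \emph{ratio} of densities, so the Jacobian weights introduced by a change of variables will cancel, leaving both $r$ and the integrating mass unchanged.

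Concretely, I would let $Z:=g(Y)$ and $\mathbf{W}:=\mathbf{h}(\mathbf{X}_u)$, where $g:\mathbb{R}\to\mathbb{R}$ and $\mathbf{h}:\mathbb{R}^{|u|}\to\mathbb{R}^{|u|}$ are diffeomorphisms, and consider the induced product map $T(\mathbf{x}_u,\xi):=(\mathbf{h}(\mathbf{x}_u),g(\xi))$. Because $g$ acts only on $\xi$ and $\mathbf{h}$ only on $\mathbf{x}_u$, the Jacobian of $T$ is block-diagonal, with $|\det DT|=|\det D\mathbf{h}(\mathbf{x}_u)|\,|g'(\xi)|$. The change-of-variables formula then yields the transformed densities at $\mathbf{w}=\mathbf{h}(\mathbf{x}_u)$, $z=g(\xi)$ as $f_{Z}(z)=f_{Y}(\xi)/|g'(\xi)|$, $f_{\mathbf{W}}(\mathbf{w})=f_{\mathbf{X}_u}(\mathbf{x}_u)/|\det D\mathbf{h}(\mathbf{x}_u)|$, and $f_{\mathbf{W},Z}(\mathbf{w},z)=f_{\mathbf{X}_u,Y}(\mathbf{x}_u,\xi)/(|\det D\mathbf{h}(\mathbf{x}_u)|\,|g'(\xi)|)$, where $Z$ is conditioned on $\mathbf{W}$ exactly as $Y$ is on $\mathbf{X}_u$ since $\mathbf{h}$ is a bijection.

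Substituting these into the ratio for the transformed pair $(\mathbf{W},Z)$, the three Jacobian factors cancel identically, so $f_{Z}(z)f_{\mathbf{W}}(\mathbf{w})/f_{\mathbf{W},Z}(\mathbf{w},z)=r(\mathbf{x}_u,\xi)$; and since $d\mathbf{w}\,dz=|\det D\mathbf{h}(\mathbf{x}_u)|\,|g'(\xi)|\,d\mathbf{x}_u\,d\xi$, the mass element obeys $f_{\mathbf{W},Z}(\mathbf{w},z)\,d\mathbf{w}\,dz=f_{\mathbf{X}_u,Y}(\mathbf{x}_u,\xi)\,d\mathbf{x}_u\,d\xi$. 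Feeding both identities into the third line of (\ref{5}) written for $(\mathbf{W},Z)$ and changing variables back to $(\mathbf{x}_u,\xi)$ shows that the transformed index equals $H_{u,f}$, which is the claim.

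The computation is routine; the only point demanding care is the bookkeeping that makes the cancellation exact. I would therefore stress (i) that $T$ must be a \emph{product} of separate diffeomorphisms of $Y$ and of $\mathbf{X}_u$ --- a map mixing the two blocks would destroy the marginal structure and hence the conditional interpretation --- so that the Jacobian genuinely factors; and (ii) that absolute values of the Jacobian determinants are used throughout, so that orientation-reversing maps require no sign adjustment. With these in place, no hypothesis on $f$ beyond those already defining $D_f$ is needed, and the argument applies verbatim to transforming $Y$ alone, $\mathbf{X}_u$ alone, or both simultaneously.
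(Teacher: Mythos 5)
Your proposal is correct and follows essentially the same route as the paper's proof: both work from the last line of (\ref{5}), apply the change-of-variables formulas for $f_{Y}$, $f_{\mathbf{X}_u}$, and the joint density $f_{\mathbf{X}_u,Y}$ under separate diffeomorphisms of $Y$ and $\mathbf{X}_u$, and observe that the Jacobian factors cancel both in the density ratio inside $f$ and in the integrating mass $f_{\mathbf{X}_u,Y}(\mathbf{x}_u,\xi)\,d\mathbf{x}_u\,d\xi$. Your framing of $H_{u,f}$ as a single $f$-divergence between the joint law of $(\mathbf{X}_u,Y)$ and the product of its marginals, and your explicit remarks on the block-diagonal structure of the product map and the use of absolute Jacobian determinants, are welcome clarifications but do not change the argument.
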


\begin{proof}
For $\emptyset\neq u\subseteq\{1,\cdots,N\}$, let $\mathbf{X}_u'=\mathbf{g}_u(\mathbf{X}_u):=(g_{i_1}(\mathbf{X}_u),\cdots,g_{i_{|u|}}(\mathbf{X}_u))$ and $Y'=h(Y)$ be smooth and uniquely invertible, that is, diffeomorphic maps of random variables $\mathbf{X}_u=(X_{i_1},\cdots,X_{i_{|u|}})$ and $Y$.  From elementary probability theory, the probability densities of the transformed variables $\mathbf{X}_u'=(X_{i_1}',\cdots,X_{i_{|u|}}')$, $Y'$, and $(\mathbf{X}_u',Y')$ are
\begin{equation*}
\begin{array}{rcl}
f_{\mathbf{X}_u'}(\mathbf{x}_u')         & = &
f_{\mathbf{X}_u}(\mathbf{x}_u)
{\displaystyle \left| \det \mathbf{J}(\mathbf{x}_u) \right|^{-1} },      \\
f_{Y'}(\xi')                             & = & f_Y(\xi)
{\displaystyle \left| \dfrac{d \xi'}{d \xi} \right|^{-1} },  \\
f_{\mathbf{X}_u',Y'}(\mathbf{x}_u',\xi') & = &
f_{\mathbf{X}_u,Y}(\mathbf{x}_u,\xi)
{\displaystyle \left| \det \mathbf{J}(\mathbf{x}_u) \right|^{-1} }
{\displaystyle \left| \dfrac{d \xi'}{d \xi} \right|^{-1} },
\end{array}
\end{equation*}
respectively, where $\mathbf{J}(\mathbf{x}_u):=[\partial g_{i_p}/\partial x_{i_q}] \in \mathbb{R}^{|u|\times |u|}$, $p,q=1,\cdots,|u|$, is the Jacobian matrix of the transformation such that $\det \mathbf{J}(\mathbf{x}_u) \ne 0$ for any $\mathbf{x}_u \in \mathbb{R}^{|u|}$ and $d \xi'/d \xi=dh/d\xi \ne 0$.  Applying these relationships to the sensitivity index $H_{u,f}'$ of $Y'$ for $\mathbf{X}_u'$ defined in the last line of (\ref{5}) and noting $d\mathbf{x}_u'=|\det \mathbf{J}(\mathbf{x}_u)|d\mathbf{x}_u$ and $d\xi'=|d\xi'/d\xi|d\xi$ yields
\begin{equation*}
\begin{array}{rcl}
H_{u,f}' & = &
\int_{\mathbb{R}^{|u|}\times\mathbb{R}}
f\left(
\dfrac{f_{Y}'(\xi')f_{\mathbf{X}_u'}(\mathbf{x}_u')}
{f_{\mathbf{X}_u',Y'}(\mathbf{x}_u',\xi')}
\right)
f_{\mathbf{X}_{u}',Y'}(\mathbf{x}_u',\xi')d{\mathbf{x}_u'}d\xi' \\
         & = &
\int_{\mathbb{R}^{|u|}\times\mathbb{R}}
f\left(
\dfrac{f_{Y}(\xi) {\displaystyle \left| \dfrac{d \xi'}{d \xi} \right|^{-1} }
f_{\mathbf{X}_u}(\mathbf{x}_u) {\displaystyle \left| \det \mathbf{J}(\mathbf{x}_u) \right|^{-1} }
}
{f_{\mathbf{X}_u,Y}(\mathbf{x}_u,\xi)
{\displaystyle \left| \det \mathbf{J}(\mathbf{x}_u) \right|^{-1} }
{\displaystyle \left| \dfrac{d \xi'}{d \xi} \right|^{-1} }
}
\right)
f_{\mathbf{X}_{u},Y}(\mathbf{x}_u,\xi)
{\displaystyle \left| \det \mathbf{J}(\mathbf{x}_u) \right|^{-1} }
{\displaystyle \left| \dfrac{d \xi'}{d \xi} \right|^{-1} }     \\
         &   &
\times {\displaystyle \left| \det \mathbf{J}(\mathbf{x}_u) \right| }
{\displaystyle \left| \dfrac{d \xi'}{d \xi} \right| }
d{\mathbf{x}_u}d\xi                                            \\
         & = &
\int_{\mathbb{R}^{|u|}\times\mathbb{R}}
f\left(
\dfrac{f_{Y}(\xi)f_{\mathbf{X}_u}(\mathbf{x}_u)}
{f_{\mathbf{X}_u,Y}(\mathbf{x}_u,\xi)}
\right)
f_{\mathbf{X}_{u},Y}(\mathbf{x}_u,\xi)d{\mathbf{x}_u}d\xi       \\
         & = &
H_{u,f},
\end{array}
\end{equation*}
completing the proof.
\end{proof}

For a special case of $u=\{i\}$, $i=1,\cdots,N$, Corollary 4 of Borgonovo et al. \cite{borgonovo14} describes the monotonic invariance of a univariate sensitivity index derived from $\mathcal{L}_1$ norm or $f$-divergence. In contrast, Proposition \ref{p5} and its proof presented here are more general and different than those reported in the existing work \cite{borgonovo14}.

The invariance property of the $f$-sensitivity index described by Proposition \ref{p5} does not hold in general for the variance-based Sobol index \cite{borgonovo14}. The latter index is invariant only under affine transformations. Moreover, the $f$-sensitivity index, unlike the Sobol index, is applicable to random input following dependent probability distributions.

\begin{proposition}[Bounds for metric $f$-divergences]
Let $\emptyset\neq u, v\subset\{1,\cdots,N\}$ be two disjoint subsets such that $u \cap v = \emptyset$. For probability measures $P_Y$, $P_{Y|\mathbf{X}_u}$, and $P_{Y|\mathbf{X}_{u \cup v}}$, let $f$ be a select convex generating function, which produces metric $f$-divergences from $P_Y$ to $P_{Y|\mathbf{X}_{u \cup v}}$, from $P_Y$ to $P_{Y|\mathbf{X}_{u}}$, and from $P_{Y|\mathbf{X}_u}$ to $P_{Y|\mathbf{X}_{u \cup v}}$, satisfying the triangle inequality
\begin{equation*}
D_f\left( P_Y \parallel P_{Y|\mathbf{X}_{u \cup v}}\right) \le
D_f\left( P_Y \parallel P_{Y|\mathbf{X}_{u}}\right) +
D_f\left( P_{Y|\mathbf{X}_{u}} \parallel P_{Y|\mathbf{X}_{u \cup v}}\right).
\end{equation*}
Then
\begin{equation}
H_{u,f} \le H_{u \cup v,f} \le H_{u,f} + H_{v|u,f},
\label{sr2}
\end{equation}
where $H_{v|u,f}:=
\mathbb{E}_{\mathbf{X}_{u \cup v}}[D_f( P_{Y|\mathbf{X}_{u}} \parallel P_{Y|\mathbf{X}_{u \cup v}})]$ is the conditional sensitivity index of $Y|\mathbf{X}_{u}$ for $\mathbf{X}_{u \cup v}$.  Furthermore, if $\mathbf{X}_u$ and $\mathbf{X}_v$ are statistically independent, then
\begin{equation}
H_{u,f} \le H_{u \cup v,f} \le H_{u,f} + H_{v,f}.
\label{sr2b}
\end{equation}
\label{p6}
\end{proposition}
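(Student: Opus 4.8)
The plan is to prove the two inequalities in (\ref{sr2}) separately and then sharpen the second one to obtain (\ref{sr2b}). The right inequality will follow from the hypothesized triangle inequality together with linearity of the expectation; the left inequality is instead a consequence of the convexity of $f$ and needs no metric assumption; and the independent case (\ref{sr2b}) will follow by collapsing the conditional index $H_{v|u,f}$ to $H_{v,f}$.

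For the left inequality $H_{u,f}\le H_{u\cup v,f}$, I would start from the law of total probability, which writes the coarser conditional density as a mixture of the finer one,
\[
f_{Y|\mathbf{X}_u}(\xi|\mathbf{x}_u)=\int_{\mathbb{R}^{|v|}} f_{Y|\mathbf{X}_{u\cup v}}(\xi|\mathbf{x}_u,\mathbf{x}_v)\, f_{\mathbf{X}_v|\mathbf{X}_u}(\mathbf{x}_v|\mathbf{x}_u)\,d\mathbf{x}_v ,
\]
so that $P_{Y|\mathbf{X}_u=\mathbf{x}_u}$ is the $\mathbf{X}_v|\mathbf{X}_u$-average of $P_{Y|\mathbf{X}_{u\cup v}=(\mathbf{x}_u,\mathbf{x}_v)}$. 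Fixing $\xi$ and the numerator $p:=f_Y(\xi)$, the perspective map $\phi(q):=qf(p/q)$ is convex on $(0,\infty)$ because $f$ is convex, so Jensen's inequality for this mixture yields, pointwise in $(\xi,\mathbf{x}_u)$,
\[
\phi\big(f_{Y|\mathbf{X}_u}(\xi|\mathbf{x}_u)\big)\le \mathbb{E}_{\mathbf{X}_v|\mathbf{X}_u=\mathbf{x}_u}\big[\phi\big(f_{Y|\mathbf{X}_{u\cup v}}(\xi|\mathbf{x}_u,\mathbf{X}_v)\big)\big].
\]
Integrating over $\xi$ and interchanging the $\xi$-integral with the $\mathbf{X}_v|\mathbf{X}_u$-expectation gives $D_f(P_Y\parallel P_{Y|\mathbf{X}_u=\mathbf{x}_u})\le \mathbb{E}_{\mathbf{X}_v|\mathbf{X}_u=\mathbf{x}_u}[D_f(P_Y\parallel P_{Y|\mathbf{X}_{u\cup v}})]$; taking $\mathbb{E}_{\mathbf{X}_u}$ and invoking the tower property then delivers $H_{u,f}\le H_{u\cup v,f}$.

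For the right inequality in (\ref{sr2}), I would apply the hypothesized triangle inequality pointwise in $\mathbf{x}_{u\cup v}$ and take $\mathbb{E}_{\mathbf{X}_{u\cup v}}$ of the three terms. The left-hand side becomes $H_{u\cup v,f}$ by (\ref{4b}); the first term on the right collapses to $H_{u,f}$, since $D_f(P_Y\parallel P_{Y|\mathbf{X}_u})$ depends on $\mathbf{x}_{u\cup v}$ only through $\mathbf{x}_u$, so integrating out $\mathbf{x}_v$ against $f_{\mathbf{X}_v|\mathbf{X}_u}$ returns the $\mathbf{X}_u$-expectation of $D_f(P_Y\parallel P_{Y|\mathbf{X}_u})$; and the remaining term is exactly $H_{v|u,f}$ by its definition. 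This establishes $H_{u\cup v,f}\le H_{u,f}+H_{v|u,f}$.

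The refinement (\ref{sr2b}) is where I expect the genuine difficulty. Under $\mathbf{X}_u\perp\mathbf{X}_v$ one has $f_{\mathbf{X}_v|\mathbf{X}_u}(\mathbf{x}_v|\mathbf{x}_u)=f_{\mathbf{X}_v}(\mathbf{x}_v)$, and the plan is to exploit this to reduce $H_{v|u,f}$ to $H_{v,f}$ and substitute into (\ref{sr2}). The obstacle is that the inner divergence $D_f(P_{Y|\mathbf{X}_u}\parallel P_{Y|\mathbf{X}_{u\cup v}})$ carries $\mathbf{x}_u$ in both of its arguments, so the identity $\mathbf{X}_v|\mathbf{X}_u\sim\mathbf{X}_v$ by itself does not remove the dependence on $\mathbf{x}_u$; one must further argue that averaging the shared conditioning on $\mathbf{x}_u$ out of both arguments carries $H_{v|u,f}$ to $H_{v,f}$. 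This step is delicate because the elementary comparison furnished by the same joint-convexity argument used above runs in the opposite direction, giving only $H_{v,f}\le H_{v|u,f}$; hence the reduction required for (\ref{sr2b}) cannot rest on convexity alone and must instead be extracted from the metric (triangle-inequality) structure assumed for $f$, which I anticipate to be the crux of the proof.
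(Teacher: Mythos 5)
Your handling of the two inequalities in (\ref{sr2}) is correct, and the right-hand one coincides with the paper's proof: apply the hypothesized triangle inequality pointwise, take $\mathbb{E}_{\mathbf{X}_{u\cup v}}$, collapse the first term to $H_{u,f}$ by integrating out $\mathbf{x}_v$ (since $D_f(P_Y \parallel P_{Y|\mathbf{X}_u=\mathbf{x}_u})$ does not depend on $\mathbf{x}_v$), and recognize the second term as $H_{v|u,f}$. For the left-hand inequality your route is genuinely different from --- and sharper than --- the paper's: the paper does not derive $H_{u,f}\le H_{u\cup v,f}$ from any displayed computation, but only remarks that $H_{v|u,f}$ is non-negative and vanishes iff $Y$ and $\mathbf{X}_v$ are independent, which bounds $H_{u\cup v,f}$ from above but establishes nothing from below. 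Your argument --- writing $f_{Y|\mathbf{X}_u}(\xi|\mathbf{x}_u)$ as the $\mathbf{X}_v|\mathbf{X}_u$-mixture of $f_{Y|\mathbf{X}_{u\cup v}}$, invoking convexity of the perspective map $q\mapsto qf(p/q)$ with $p=f_Y(\xi)$ fixed, and applying Jensen plus the tower property --- is a complete data-processing-style proof, and, as you correctly observe, it requires no metric hypothesis at all, so it strengthens the proposition on that side.

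The genuine gap is (\ref{sr2b}): you identify the needed reduction $H_{v|u,f}\to H_{v,f}$ but never carry it out, so the proposal does not prove the second claim of the proposition. Moreover, your conjecture about the mechanism is not how the paper proceeds: the paper's reduction is purely algebraic and uses no metric structure. Starting from the integral representation of $H_{v|u,f}$ in the form of the last line of (\ref{5}), it factors the numerator, under $\mathbf{X}_u\perp\mathbf{X}_v$, as $f_{Y|\mathbf{X}_u}f_{\mathbf{X}_u}f_{\mathbf{X}_v}=f_{\mathbf{X}_u|Y}\,f_Y\,f_{\mathbf{X}_v}$ and the denominator as $f_{\mathbf{X}_{u\cup v},Y}=f_{\mathbf{X}_u|Y}\,f_{\mathbf{X}_v,Y}$, cancels $f_{\mathbf{X}_u|Y}$ so that the argument of $f$ depends only on $(\mathbf{x}_v,\xi)$, and marginalizes $\mathbf{x}_u$ out of the weight to conclude $H_{v|u,f}=H_{v,f}$ exactly, whence (\ref{sr2b}). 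That said, your instinct that this step is delicate is well founded: the paper's denominator factorization rests on $f_{\mathbf{X}_u|(\mathbf{X}_v,Y)}=f_{\mathbf{X}_u|Y}$, i.e., conditional independence of $\mathbf{X}_u$ and $\mathbf{X}_v$ given $Y$, which is strictly stronger than the hypothesized marginal independence (conditioning on $Y=y(\mathbf{X})$ generally couples $\mathbf{X}_u$ and $\mathbf{X}_v$), and your correct convexity observation that $H_{v,f}\le H_{v|u,f}$, possibly strictly, points at exactly this tension. So to complete your proof as the paper intends, you would supply the cancellation identity above rather than any triangle-inequality argument; a fully rigorous treatment would also have to address the conditional-independence assumption implicit in that identity.
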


\begin{proof}
Applying the expectation operator $\mathbb{E}_{\mathbf{X}_{u \cup v}}$ on both sides of the triangle inequality yields

\begin{equation}
\begin{array}{cl}
      &  \int_{\mathbb{R}^{|u \cup v|}}
D_f\left( P_Y \parallel P_{Y|\mathbf{X}_{u \cup v}=\mathbf{x}_{u \cup v}}\right)
f_{\mathbf{X}_{u \cup v}}(\mathbf{x}_{u \cup v})d{\mathbf{x}_{u \cup v}}  \\
      &  ~~~~ \le \int_{\mathbb{R}^{|u \cup v|}}
D_f\left( P_Y \parallel P_{Y|\mathbf{X}_{u}=\mathbf{x}_{u}}\right)
f_{\mathbf{X}_{u \cup v}}(\mathbf{x}_{u \cup v})d{\mathbf{x}_{u \cup v}}  \\
      &  ~~~~~~~~+ \int_{\mathbb{R}^{|u \cup v|}}
D_f\left( P_{Y|\mathbf{X}_{u}=\mathbf{x}_{u}} \parallel P_{Y|\mathbf{X}_{u \cup v}=\mathbf{x}_{u \cup v}}\right)
f_{\mathbf{X}_{u \cup v}}(\mathbf{x}_{u \cup v})d{\mathbf{x}_{u \cup v}}.
\end{array}
\label{sr3}
\end{equation}
Since, for $u \cap v = \emptyset$, $D_f( P_Y \parallel P_{Y|\mathbf{X}_{u}=\mathbf{x}_{u}})$ does not depend on $\mathbf{x}_{v}$, the first integral on the right side of (\ref{sr3}) reduces to
\begin{equation*}
\begin{array}{cl}
      & \int_{\mathbb{R}^{|u \cup v|}}
D_f\left( P_Y \parallel P_{Y|\mathbf{X}_{u}=\mathbf{x}_{u}}\right)
f_{\mathbf{X}_{u \cup v}}(\mathbf{x}_{u \cup v})d{\mathbf{x}_{u \cup v}} \\
      & ~~~~ = \int_{\mathbb{R}^{|u|}}
D_f\left( P_Y \parallel P_{Y|\mathbf{X}_{u}=\mathbf{x}_{u}}\right)
\left(  \int_{\mathbb{R}^{|v|}}
f_{\mathbf{X}_{u \cup v}}(\mathbf{x}_{u \cup v})d{\mathbf{x}_{v}}
\right)
d{\mathbf{x}_{u}}   \\
      & ~~~~ = \int_{\mathbb{R}^{|u|}}
D_f\left( P_Y \parallel P_{Y|\mathbf{X}_{u}=\mathbf{x}_{u}}\right)
f_{\mathbf{X}_{u}}(\mathbf{x}_{u})d{\mathbf{x}_{u}}.
\end{array}
\end{equation*}
Therefore, (\ref{sr3}) becomes
\begin{equation}
\begin{array}{rcl}
      &  \int_{\mathbb{R}^{|u \cup v|}}
D_f\left( P_Y \parallel P_{Y|\mathbf{X}_{u \cup v}=\mathbf{x}_{u \cup v}}\right)
f_{\mathbf{X}_{u \cup v}}(\mathbf{x}_{u \cup v})d{\mathbf{x}_{u \cup v}}  \\
      &  ~~~~ \le \int_{\mathbb{R}^{|u|}}
D_f\left( P_Y \parallel P_{Y|\mathbf{X}_{u}=\mathbf{x}_{u}}\right)
f_{\mathbf{X}_{u}}(\mathbf{x}_{u})d{\mathbf{x}_{u}}   \\
      &  ~~~~~~~~~~~~~~~~~~~~~~~~~~~~~~+ \int_{\mathbb{R}^{|u \cup v|}}
D_f\left( P_{Y|\mathbf{X}_{u}=\mathbf{x}_{u}} \parallel P_{Y|\mathbf{X}_{u \cup v}=\mathbf{x}_{u \cup v}}\right)
f_{\mathbf{X}_{u \cup v}}(\mathbf{x}_{u \cup v})d{\mathbf{x}_{u \cup v}}.
\end{array}
\label{sr4}
\end{equation}
Recognizing the sensitivity indices $H_{u \cup v,f}$, $H_{u,f}$, and $H_{v|u,f}$ to be respectively the
integral on the left side, the first integral on the right side, and the second integral on the right side of (\ref{sr4}) produces the upper bound in (\ref{sr2}).  In addition, observe that the sensitivity index $H_{v|u,f}$ is non-negative, represents the contribution of the divergence from $P_{Y|\mathbf{X}_u}$ to $P_{Y|\mathbf{X}_{u \cup v}}$, and vanishes if and only if $Y$ and $\mathbf{X}_v$ are statistically independent.  Therefore, $H_{u \cup v,f}$ reaches the lower bound, which is $H_{u,f}$, if and only if $Y$ and $\mathbf{X}_v$ are statistically independent.

To obtain (\ref{sr2b}), use the last line of (\ref{5}) to write
\begin{equation}
H_{v|u,f} :=
\int_{\mathbb{R}^{|u \cup v|}\times\mathbb{R}}
f\left(
\dfrac{f_{Y|\mathbf{X}_u}(\xi|\mathbf{x}_u)f_{\mathbf{X}_{u \cup v}}(\mathbf{x}_{u \cup v})}
{f_{\mathbf{X}_{u \cup v,Y}}(\mathbf{x}_{u \cup v},\xi)}
\right)
f_{\mathbf{X}_{u \cup v,Y}}(\mathbf{x}_{u \cup v},\xi)d{\mathbf{x}_{u \cup v}}d\xi,
\label{sr4b}
\end{equation}
where, by invoking the statistical independence between $\mathbf{X}_u$ and $\mathbf{X}_v$, the numerator and denominator of the argument of $f$ become
\begin{equation}
\begin{array}{rcl}
f_{Y|\mathbf{X}_u}(\xi|\mathbf{x}_u)f_{\mathbf{X}_{u \cup v}}(\mathbf{x}_{u \cup v})
        & = &
f_{Y|\mathbf{X}_u}(\xi|\mathbf{x}_u)f_{\mathbf{X}_{u}}(\mathbf{x}_{u})f_{\mathbf{X}_{v}}(\mathbf{x}_{v}) \\
        & = &
f_{\mathbf{X}_u,Y}(\mathbf{x}_{u},\xi)f_{\mathbf{X}_{v}}(\mathbf{x}_{v})  \\
        & = &
f_{\mathbf{X}_u|Y}(\mathbf{x}_{u}|\xi)f_Y(\xi)f_{\mathbf{X}_{v}}(\mathbf{x}_{v})
\end{array}
\label{sr4c}
\end{equation}
and
\begin{equation}
f_{\mathbf{X}_{u \cup v,Y}}(\mathbf{x}_{u \cup v},\xi) =
f_{\mathbf{X}_u|(\mathbf{X}_v,Y)}(\mathbf{x}_{u}|(\mathbf{x}_v,\xi))f_{\mathbf{X}_{v,Y}}(\mathbf{x}_{v},\xi) =
f_{\mathbf{X}_u|Y}(\mathbf{x}_{u}|\xi)f_{\mathbf{X}_{v,Y}}(\mathbf{x}_{v},\xi),
\label{sr4d}
\end{equation}
respectively.  Applying (\ref{sr4c}) and (\ref{sr4d}) to (\ref{sr4b}) results in
\begin{equation*}
\begin{array}{rcl}
H_{v|u,f}
        & = &
\int_{\mathbb{R}^{|u \cup v|}\times \mathbb{R}}
f\left(
\dfrac{f_Y(\xi)f_{\mathbf{X}_{v}}(\mathbf{x}_{v})}
{f_{\mathbf{X}_{v,Y}}(\mathbf{x}_{v},\xi)}
\right)
f_{\mathbf{X}_{u \cup v,Y}}(\mathbf{x}_{u \cup v},\xi)d{\mathbf{x}_{u \cup v}}d\xi \\
        & = &
\int_{\mathbb{R}^{|v|}\times \mathbb{R}}
f\left(
\dfrac{f_Y(\xi)f_{\mathbf{X}_{v}}(\mathbf{x}_{v})}
{f_{\mathbf{X}_{v,Y}}(\mathbf{x}_{v},\xi)}
\right)
\left(
\int_{\mathbb{R}^{|u|}}
f_{\mathbf{X}_{u \cup v,Y}}(\mathbf{x}_{u \cup v},\xi)d{\mathbf{x}_{u}}
\right)
d{\mathbf{x}_{v}}d\xi \\
        & = &
\int_{\mathbb{R}^{|v|}\times \mathbb{R}}
f\left(
\dfrac{f_Y(\xi)f_{\mathbf{X}_{v}}(\mathbf{x}_{v})}
{f_{\mathbf{X}_{v,Y}}(\mathbf{x}_{v},\xi)}
\right)
f_{\mathbf{X}_{v,Y}}(\mathbf{x}_{v},\xi) d{\mathbf{x}_{v}}d\xi \\
        & = &
H_{v,f},
\end{array}
\end{equation*}
which transforms (\ref{sr2}) to (\ref{sr2b}) and hence completes the proof.
\end{proof}

As a special case, consider again $u=\{i\}$ and $v=\{j\}$, where $i,j = 1,\cdots,N$, $i \ne j$.  Then, according to Proposition \ref{p6}, applicable to sensitivity indices rooted in metric $f$-divergences only,
\begin{equation*}
H_{\{i\},f} \le H_{\{i,j\},f} \le H_{\{i\},f}+H_{\{j|i\},f},
\end{equation*}
which states the following: if $Y$ depends on $X_j$, then the contribution of $X_j$ to the sensitivity of $Y$ for $(X_i,X_j)$ increases from $H_{\{i\},f}$, but is limited by the residual term $H_{\{j|i\},f}$.  If $Y$ and $X_j$ are statistically independent, then $H_{\{j|i\},f}$ vanishes, resulting in $H_{\{i,j\},f} = H_{\{i\},f}$.  This agrees with Proposition \ref{p4}, which, however, is valid whether or not the underlying $f$-divergence is a metric.  In addition, if $X_i$ and $X_j$ are statistically independent, then $H_{\{j|i\},f}=H_{\{j\},f}$, yielding  $H_{\{i\},f} \le H_{\{i,j\},f} \le H_{\{i\},f}+H_{\{j\},f}$. Borgonovo \cite{borgonovo07} derived the same bounds for a special case when the sensitivity index stems from the total variational distance.  Proposition \ref{p6}, by contrast, is a general result and applicable to sensitivity indices emanating from all metric $f$-divergences.

\subsection{Special cases}
A plethora of $f$-sensitivity indices are possible by appropriately selecting the convex function $f$ in (\ref{4b}) or (\ref{5}).  Listed in Table \ref{table1} are ten such sensitivity indices derived from the forward and reversed Kullback-Leibler divergences, total variational distance, Hellinger distance, Pearson $\chi^2$ divergence, Neyman $\chi^2$ divergence, $\alpha$ divergence, Vajda $\chi^\alpha$ divergence, Jeffreys distance, and triangular discrimination in (\ref{3a}) through (\ref{3j}).  Three prominent sensitivity indices, for example, the mutual information \cite{cover91}
\begin{equation*}
I_u:=
\int_{\mathbb{R}^{|u|}\times\mathbb{R}}
\ln \left[
\dfrac{f_{\mathbf{X}_u,Y}(\mathbf{x}_u,\xi)}{f_{Y}(\xi) f_{\mathbf{X}_u}(\mathbf{x}_u)}
\right]
f_{\mathbf{X}_u,Y}(\mathbf{x}_u,\xi)
d{\mathbf{x}_u}d\xi =:H_{u,KL'}
\end{equation*}
between $\mathbf{X}_u$ and $Y$, the squared-loss mutual information \cite{suzuki09}
\begin{equation*}
\begin{array}{rcl}
I'_u & := &
\int_{\mathbb{R}^{|u|}\times\mathbb{R}}
\left[
\dfrac{f_{\mathbf{X}_u,Y}(\mathbf{x}_u,\xi)}{f_{Y}(\xi) f_{\mathbf{X}_u}(\mathbf{x}_u)} - 1
\right]^2  f_{Y}(\xi) f_{\mathbf{X}_u}(\mathbf{x}_u)  d{\mathbf{x}_u}d\xi  \\
     & =  &
\int_{\mathbb{R}^{|u|}\times\mathbb{R}}
\dfrac{f_{\mathbf{X}_u,Y}(\mathbf{x}_u,\xi)}{f_{Y}(\xi) f_{\mathbf{X}_u}(\mathbf{x}_u)}
\left[1 - \left\{
\dfrac{f_{Y}(\xi) f_{\mathbf{X}_u}(\mathbf{x}_u)}{f_{\mathbf{X}_u,Y}(\mathbf{x}_u,\xi)}
\right\}^2
\right]
f_{\mathbf{X}_u,Y}(\mathbf{x}_u,\xi)
d{\mathbf{x}_u}d\xi                                                     \\
     & =: &
H_{u,N}
\end{array}
\end{equation*}
between $\mathbf{X}_u$ and $Y$, and Borgonovo's importance measure \cite{borgonovo07}
\begin{equation*}
\begin{array}{rcl}
\delta_u & := &
\dfrac{1}{2}\int_{\mathbb{R}^{|u|}\times\mathbb{R}}
\left|
f_{Y}(\xi) f_{\mathbf{X}_u}(\mathbf{x}_u)-f_{\mathbf{X}_u,Y}(\mathbf{x}_u,\xi)
\right|
d{\mathbf{x}_u}d\xi                                                      \\
         & =  &
\dfrac{1}{2}\int_{\mathbb{R}^{|u|}\times\mathbb{R}}
\left|
\dfrac{f_{Y}(\xi) f_{\mathbf{X}_u}(\mathbf{x}_u)}{f_{\mathbf{X}_u,Y}(\mathbf{x}_u,\xi)}-1
\right|
f_{\mathbf{X}_u,Y}(\mathbf{x}_u,\xi)
d{\mathbf{x}_u}d\xi                                                       \\
         & =: &
\dfrac{1}{2}H_{u,TV}
\end{array}
\end{equation*}
of $\mathbf{X}_u$ on $Y$, are rooted in reversed Kullback-Leibler, Neyman, and total variational divergences or distances, respectively. Indeed, many previously used sensitivity or importance measures are special cases of the $f$-sensitivity index derived from the $f$-divergence.
\begin{table}
\caption{Ten special cases of the $f$-sensitivity index }
\raggedright{}{\footnotesize{}}%
\begin{centering}
\begin{tabular}{lll}
\hline
{\footnotesize{$f$-}}\textbf{\footnotesize{divergence}} & \textbf{\footnotesize{$f(t)$}}$^{(\mathrm{a})}$ & \textbf{\footnotesize{Sensitivity index}}\tabularnewline
\hline
 &  & \tabularnewline
{\footnotesize{$\begin{array}{l}
\!\!\!\!\mathrm{Forward\: Kullback-}\\
\!\!\!\!\mathrm{Leibler\: divergence}
\end{array}$}} & {\footnotesize{$t\ln t$}} & {\footnotesize{$\begin{array}{rl}\!\!\!\!
H_{u,KL}:= & \int_{\mathbb{R}^{|u|}\times\mathbb{R}}\dfrac{f_{Y}(\xi)f_{\mathbf{X}_{u}}(\mathbf{x}_{u})}{f_{\mathbf{X}_{u},Y}(\mathbf{x}_{u},\xi)}\ln\left[\dfrac{f_{Y}(\xi)f_{\mathbf{X}_{u}}(\mathbf{x}_{u})}{f_{\mathbf{X}_{u},Y}(\mathbf{x}_{u},\xi)}\right]\\
 & \times f_{\mathbf{X}_{u},Y}(\mathbf{x}_{u},\xi)d\mathbf{x}_{u}d\xi
\end{array}$}}\tabularnewline
\noalign{\vskip0.2cm}
{\footnotesize{$\begin{array}{l}
\!\!\!\!\mathrm{Reversed\: Kullback-}\\
\!\!\!\!\mathrm{Leibler\: divergence}
\end{array}$}} & {\footnotesize{$-\ln t$}} & {\footnotesize{$\begin{array}{rl}\!\!\!\!
H_{u,KL'}:= & \int_{\mathbb{R}^{|u|}\times\mathbb{R}}\ln\left[\dfrac{f_{\mathbf{X}_{u},Y}(\mathbf{x}_{u},\xi)}{f_{Y}(\xi)f_{\mathbf{X}_{u}}(\mathbf{x}_{u})}\right]f_{\mathbf{X}_{u},Y}(\mathbf{x}_{u},\xi)d\mathbf{x}_{u}d\xi\end{array}$}}\tabularnewline
\noalign{\vskip0.2cm}
{\footnotesize{$\begin{array}{l}
\!\!\!\!\mathrm{Total\: variational-}\\
\!\!\!\!\mathrm{distance}
\end{array}$}} & {\footnotesize{$\left|t-1\right|$}} & {\footnotesize{$\begin{array}{rl}\!\!\!\!
H_{u,TV}:= & \int_{\mathbb{R}^{|u|}\times\mathbb{R}}\left|\dfrac{f_{Y}(\xi)f_{\mathbf{X}_{u}}(\mathbf{x}_{u})}{f_{\mathbf{X}_{u},Y}(\mathbf{x}_{u},\xi)}-1\right|f_{\mathbf{X}_{u},Y}(\mathbf{x}_{u},\xi)d\mathbf{x}_{u}d\xi\end{array}$}}\tabularnewline
\noalign{\vskip0.2cm}
{\footnotesize{$\begin{array}{l}
\!\!\!\!\mathrm{Hellinger}\\
\!\!\!\!\mathrm{distance}
\end{array}$}} & {\footnotesize{$\left(\sqrt{t}-1\right)^{2}$}} & {\footnotesize{$\begin{array}{rl}\!\!\!\!
H_{u,H}:= & \int_{\mathbb{R}^{|u|}\times\mathbb{R}}\left[\sqrt{\dfrac{f_{Y}(\xi)f_{\mathbf{X}_{u}}(\mathbf{x}_{u})}{f_{\mathbf{X}_{u},Y}(\mathbf{x}_{u},\xi)}}-1\right]^{2}f_{\mathbf{X}_{u},Y}(\mathbf{x}_{u},\xi)d\mathbf{x}_{u}d\xi\end{array}$}}\tabularnewline
\noalign{\vskip0.2cm}
{\footnotesize{$\begin{array}{l}
\!\!\!\!\mathrm{Pearson\:\chi^{2}}\\
\!\!\!\!\mathrm{divergence}
\end{array}$}} & {\footnotesize{$t^{2}-1$}} & {\footnotesize{$\begin{array}{rl}\!\!\!\!
H_{u,P}:= & \int_{\mathbb{R}^{|u|}\times\mathbb{R}}\left[\left\{ \dfrac{f_{Y}(\xi)f_{\mathbf{X}_{u}}(\mathbf{x}_{u})}{f_{\mathbf{X}_{u},Y}(\mathbf{x}_{u},\xi)}\right\} ^{2}-1\right]f_{\mathbf{X}_{u},Y}(\mathbf{x}_{u},\xi)d\mathbf{x}_{u}d\xi\end{array}$}}\tabularnewline
\noalign{\vskip0.2cm}
{\footnotesize{$\begin{array}{l}
\!\!\!\!\mathrm{Neyman\:\chi^{2}}\\
\!\!\!\!\mathrm{divergence}
\end{array}$}} & {\footnotesize{$\dfrac{1-t^{2}}{t}$}} & {\footnotesize{$\begin{array}{rl}\!\!\!\!
H_{u,N}:= & \int_{\mathbb{R}^{|u|}\times\mathbb{R}}\dfrac{f_{\mathbf{X}_{u},Y}(\mathbf{x}_{u},\xi)}{f_{Y}(\xi)f_{\mathbf{X}_{u}}(\mathbf{x}_{u})}\left[1-\left\{ \dfrac{f_{Y}(\xi)f_{\mathbf{X}_{u}}(\mathbf{x}_{u})}{f_{\mathbf{X}_{u},Y}(\mathbf{x}_{u},\xi)}\right\} ^{2}\right]\\
 & \times f_{\mathbf{X}_{u},Y}(\mathbf{x}_{u},\xi)d\mathbf{x}_{u}d\xi
\end{array}$}}\tabularnewline
\noalign{\vskip0.2cm}
{\footnotesize{$\alpha-\mathrm{divergence}$$^{(\mathrm{b})}$}} & {\footnotesize{$\dfrac{4\left[1-t^{(1-\alpha)/2}\right]}{1-\alpha^{2}}$}} & {\footnotesize{$\begin{array}{rl}\!\!\!\!
H_{u,\alpha}:= & \dfrac{4}{1-\alpha^{2}}\int_{\mathbb{R}^{|u|}\times\mathbb{R}}\left[1-\left\{ \dfrac{f_{Y}(\xi)f_{\mathbf{X}_{u}}(\mathbf{x}_{u})}{f_{\mathbf{X}_{u},Y}(\mathbf{x}_{u},\xi)}\right\} ^{(1-\alpha)/2}\right]\\
 & \times f_{\mathbf{X}_{u},Y}(\mathbf{x}_{u},\xi)d\mathbf{x}_{u}d\xi
\end{array}$}}\tabularnewline
\noalign{\vskip0.2cm}
{\footnotesize{$\begin{array}{l}
\!\!\!\!\mathrm{Vajda\:\chi^\alpha}\\
\!\!\!\!\mathrm{divergence}^{(\mathrm{c})}
\end{array}$}} & {\footnotesize{$|t-1|^\alpha$}} & {\footnotesize{$\begin{array}{rl}\!\!\!\!
H_{u,V}:= & \int_{\mathbb{R}^{|u|}\times\mathbb{R}}\left|\dfrac{f_{Y}(\xi)f_{\mathbf{X}_{u}}(\mathbf{x}_{u})}{f_{\mathbf{X}_{u},Y}(\mathbf{x}_{u},\xi)}-1\right|^\alpha f_{\mathbf{X}_{u},Y}(\mathbf{x}_{u},\xi)d\mathbf{x}_{u}d\xi\end{array}$}}\tabularnewline
\noalign{\vskip0.2cm}
{\footnotesize{$\begin{array}{l}
\!\!\!\!\mathrm{Jeffreys}\\
\!\!\!\!\mathrm{distance}
\end{array}$}} & {\footnotesize{$(t-1)\ln t$}} & {\footnotesize{$\begin{array}{rl}\!\!\!\!
H_{u,J}:= & \int_{\mathbb{R}^{|u|}\times\mathbb{R}}\left[\dfrac{f_{Y}(\xi)f_{\mathbf{X}_{u}}(\mathbf{x}_{u})}{f_{\mathbf{X}_{u},Y}(\mathbf{x}_{u},\xi)}-1\right]\ln\left[\dfrac{f_{Y}(\xi)f_{\mathbf{X}_{u}}(\mathbf{x}_{u})}{f_{\mathbf{X}_{u},Y}(\mathbf{x}_{u},\xi)}\right]\\
 & \times f_{\mathbf{X}_{u},Y}(\mathbf{x}_{u},\xi)d\mathbf{x}_{u}d\xi
\end{array}$}}\tabularnewline
\noalign{\vskip0.2cm}
{\footnotesize{$\begin{array}{l}
\!\!\!\!\mathrm{Triangular}\\
\!\!\!\!\mathrm{discrimination}
\end{array}$}} & {\footnotesize{$\dfrac{(t-1)^{2}}{t+1}$}} & {\footnotesize{$\begin{array}{rl}\!\!\!\!
H_{u,\triangle}:= & \int_{\mathbb{R}^{|u|}\times\mathbb{R}}\dfrac{\left[\dfrac{f_{Y}(\xi)f_{\mathbf{X}_{u}}(\mathbf{x}_{u})}{f_{\mathbf{X}_{u},Y}(\mathbf{x}_{u},\xi)}-1\right]^{2}}{\dfrac{f_{Y}(\xi)f_{\mathbf{X}_{u}}(\mathbf{x}_{u})}{f_{\mathbf{X}_{u},Y}(\mathbf{x}_{u},\xi)}+1}f_{\mathbf{X}_{u},Y}(\mathbf{x}_{u},\xi)d\mathbf{x}_{u}d\xi\end{array}$}}\tabularnewline
\noalign{\vskip0.2cm}
 &  & \tabularnewline
\hline
\end{tabular}
\end{centering}
{\footnotesize \par}

{\scriptsize{(a) All generating functions have been normalized, that is, $f(1)=0$.}}{\scriptsize \par}

{\scriptsize{(b) $\alpha \ne \pm 1$.}}{\scriptsize \par}

{\scriptsize{(c) $\alpha \ge 1$.  If $\alpha=1$, then $H_{u,V}=H_{u,TV}$.}}{\scriptsize \par}
\label{table1}
\end{table}

\subsection{Univariate index}
For practical applications, one may be interested in performing sensitivity analysis pertaining to individual random variables only. Setting $u=\{i\}$, where $i=1,\cdots,N$, in (\ref{4}) through (\ref{5}), the corresponding $f$-divergence and univariate $f$-sensitivity index of $Y$ with respect to a variable $X_i$ are
\begin{equation}
D_f \left( P_Y \parallel P_{Y|X_i}\right):=
\int_{\mathbb{R}}
f\left(\dfrac{f_{Y}(\xi)}{f_{Y|X_i}(\xi|x_i)}\right) f_{Y|X_i}(\xi|x_i)d\xi
\label{6}
\end{equation}
and
\begin{equation}
H_{\{i\},f}:=
\mathbb{E}_{X_i} \left[ D_f\left( P_Y \parallel P_{Y|X_i}\right) \right]=
\int_{\mathbb{R}\times\mathbb{R}}
f\left(
\dfrac{f_{Y}(\xi)f_{X_i}(x_i)}
{f_{X_i,Y}(x_i,\xi)}
\right)
f_{X_i,Y}(x_i,\xi)d{x_i}d\xi,
\label{7}
\end{equation}
respectively. Again, depending on the choice of the convex function $f$ in (\ref{6}) and (\ref{7}), many univariate $f$-sensitivity indices from Table 1 with $u=\{i\}$ can be generated.  For instance, the univariate $f$-sensitivity indices derived from the total variational distance, reversed Kullback-Leibler divergence, Neyman $\chi^2$ divergence, and Hellinger distance are
\begin{subequations}
\begin{equation}
H_{\{i\},TV}:=
\int_{\mathbb{R}\times\mathbb{R}}
\left|
\dfrac{f_{Y}(\xi) f_{X_i}(x_i)}{f_{X_i,Y}(x_i,\xi)}-1
\right|
f_{X_i,Y}(x_i,\xi)
d{x_i}d\xi,
\label{8a}
\end{equation}
\begin{equation}
H_{\{i\},KL'}:=
\int_{\mathbb{R}\times\mathbb{R}}
\ln \left[
\dfrac{f_{X_i,Y}(x_i,\xi)}{f_Y(\xi) f_{X_i}(x_i)}
\right]
f_{X_i,Y}(x_i,\xi)
d{x_i}d\xi,
\label{8b}
\end{equation}
\begin{equation}
H_{\{i\},N}:=
\int_{\mathbb{R}\times\mathbb{R}}
\dfrac{f_{X_i,Y}(x_i,\xi)}{f_{Y}(\xi) f_{X_i}(x_i)}
\left[1 - \left\{
\dfrac{f_{Y}(\xi) f_{X_i}(x_i)}{f_{X_i,Y}(x_i,\xi)}
\right\}^2
\right]
f_{X_i,Y}(x_i,\xi)
d{x_i}d\xi,
\label{8c}
\end{equation}
and
\begin{equation}
H_{\{i\},H}:=
\int_{\mathbb{R}\times\mathbb{R}}
\left[
\sqrt{
\dfrac{f_{Y}(\xi) f_{X_i}(x_i)}{f_{X_i,Y}(x_i,\xi)}
} - 1
\right]^2
f_{X_i,Y}(x_i,\xi)
d{x_i}d\xi,
\label{8d}
\end{equation}
\end{subequations}
respectively.  A numerical evaluation of the univariate indices defined in (\ref{8a}) through (\ref{8d}) will be discussed in section 4.

\subsection{Generalization}
The $f$-divergence and $f$-sensitivity index in (\ref{4}) through (\ref{5}) easily extend for an $M$-dimensional output vector $\mathbf{Y}=(y_{1}(\mathbf{X}),\cdots,y_{M}(\mathbf{X}))$, where $M \in \mathbb{N}$.  In this case, the $f$-divergence between two multivariate probability measures $P_{\mathbf{Y}}$ and $P_{\mathbf{Y}|\mathbf{X}_u}$ is defined by
\begin{equation*}
D_f\left( P_{\mathbf{Y}} \parallel P_{\mathbf{Y}|\mathbf{X}_u}\right):=
\int_{\mathbb{R}^M}f\left(\dfrac{f_{\mathbf{Y}}(\boldsymbol{\xi})}
{f_{\mathbf{Y}|\mathbf{X}_{u}}(\boldsymbol{\xi}|\mathbf{x}_u)}\right) f_{\mathbf{Y}|\mathbf{X}_{u}}(\boldsymbol{\xi}|\mathbf{x}_u)d\boldsymbol{\xi},
\end{equation*}
leading to the $f$-sensitivity index
\begin{equation*}
H_{u,f} := \mathbb{E}_{\mathbf{X}_u} \left[ D_f\left( P_{\mathbf{Y}} \parallel P_{\mathbf{Y}|\mathbf{X}_u}\right) \right] =
\int_{\mathbb{R}^{|u|}\times\mathbb{R}^M}
f\left(
\dfrac{f_{\mathbf{Y}}(\boldsymbol{\xi})f_{\mathbf{X}_u}(\mathbf{x}_u)}
{f_{\mathbf{X}_u,\mathbf{Y}}(\mathbf{x}_u,\boldsymbol{\xi})}
\right)
f_{\mathbf{X}_{u},\mathbf{Y}}(\mathbf{x}_u,\boldsymbol{\xi})d{\mathbf{x}_u}d\boldsymbol{\xi}.
\end{equation*}
Here, $f_{\mathbf{Y}}(\boldsymbol{\xi})$, $f_{\mathbf{Y}|\mathbf{X}_{u}}(\boldsymbol{\xi}|\mathbf{x}_u)$, and $f_{\mathbf{X}_{u},\mathbf{Y}}(\mathbf{x}_u,\boldsymbol{\xi})$ are the probability density functions of $\mathbf{Y}$, $\mathbf{Y}|\mathbf{X}_u$, and $(\mathbf{X}_u,\mathbf{Y})$, respectively.  However, calculating such a multivariate index for an arbitrarily large $|u|$ or $M$ is highly nontrivial and is not addressed here.

\section{Global sensitivity analysis}
While the formulation of the $f$-sensitivity index is not unduly difficult, its calculation is a different matter and by and large a formidable task. If the convex function $f$ is already selected, resulting in a specific sensitivity index, then one can exploit the functional form of $f$ to devise accurate and efficient methods of calculation. This is exemplified in the works of Borgonovo \cite{borgonovo07}, Liu and Homma \cite{liu09}, and Wei, Lu, and Yuan \cite{wei13}, presenting several estimation procedures for calculating Borgonovo's importance measure.  More recent works involve surrogate approximations \cite{borgonovo12} and improved estimators \cite{castaings12,plischke13} for the same index.  Here, the author delves into calculating the $f$-sensitivity index derived from a general convex function $f$, so that the methods proposed are applicable to a host of sensitivity indices.

In reference to the last line of (\ref{5}), the $f$-sensitivity index $H_{u,f}$ is exactly calculated only when both the integrand function $f$, which depends on the probability densities $f_{Y}(\xi)$ and $f_{\mathbf{X}_u,Y}(\mathbf{x}_u,\xi)$, and the subsequent $(|u|+1)$-dimensional integral with respect to the probability measure of $(\mathbf{X}_u,Y)$ are exactly determined.  Depending on what can be determined exactly or not, approximate methods must be used to estimate $H_{u,f}$.  Three methods depending on specific scenarios expected in performing sensitivity analysis are proposed.

\subsection{The Monte Carlo or MC method}
Suppose that $y:\mathbb{R}^N \to \mathbb{R}$ is a mapping simple enough to produce exactly the probability densities $f_{Y}(\xi)$ and $f_{\mathbf{X}_u,Y}(\mathbf{x}_u,\xi)$.  For instance, if $y$ is an affine map of a Gaussian random vector $\mathbf{X}$, then $f_{Y}(\xi)$ and $f_{\mathbf{X}_u,Y}(\mathbf{x}_u,\xi)$ are both Gaussian density functions. However, since $f$ is unspecified, performing analytical integration still may not be possible.  The Monte Carlo (MC) method solves this problem by approximating the multi-dimensional integral by random sampling as follows.

Given a sample size $L \in \mathbb{N}$, let $\{\mathbf{x}^{(l)},\xi^{(l)}\}_{l=1,\cdots,L}$ be the set of input-output sample pairs of $(\mathbf{X},Y)$.  The input samples can be generated from the known probability density function of $\mathbf{X}$, whereas the output sample can be obtained from the known input-output transformation.  Then the index $H_{u,f}$, expressed in the last line of (\ref{5}), is approximated by the Monte Carlo (MC) estimator
\begin{equation}
\hat{H}_{u,f}^{(L)}: = \dfrac{1}{L}\sum_{l=1}^L
f\left(
\dfrac{f_{Y}(\xi^{(l)})f_{\mathbf{X}_u}(\mathbf{x}_u^{(l)})}
{f_{\mathbf{X}_u,Y}(\mathbf{x}_u^{(l)},\xi^{(l)})}
\right).
\label{9}
\end{equation}
If $f$ is square integrable with respect to $f_{\mathbf{X}_u,Y}(\mathbf{x}_u,\xi)d{\mathbf{x}_u}d\xi$, then according to the strong law of large numbers, $\hat{H}_{u,f}^{(L)} \to H_{u,f}$ as $L \to \infty$ with probability one.  Furthermore, the estimation is unbiased, which means that the average of $\hat{H}_{u,f}^{(L)}$ is exactly $H_{u,f}$.

\subsection{The KDE-MC method}
For an arbitrary response function or probability distribution of random input, obtaining exact probability densities of responses is wishful thinking. In reality, $y(\mathbf{X})$ may be highly nonlinear, where the input variables $\mathbf{X}$ may not follow classical probability distributions.  In which case, a new layer of approximation in determining the probability densities must be dealt with. A straightforward, practical approach entails employing kernel density estimation (KDE) \cite{parzen62,rosenblatt56} of the probability densities and then applying Monte Carlo integration.  The resulting method is referred to as the KDE-MC method.

As in the MC method, assume that for $L \in \mathbb{N}$, the input-output sample pairs $\{\mathbf{x}^{(l)},\xi^{(l)}\}_{l=1,\cdots,L}$ of $(\mathbf{X},Y)$ are available.  Using these samples, the KDEs of the probability densities $f_{Y}(\xi)$ and $f_{\mathbf{X}_u,Y}(\mathbf{x}_u,\xi)$ take the form
\begin{subequations}
\begin{equation}
\bar{f}_{Y}(\xi)=\dfrac{1}{L h_Y}\sum_{l=1}^L
K_Y \left( \dfrac{\xi-\xi^{(l)}}{h_Y} \right)
\label{10a}
\end{equation}
and
\begin{equation}
\bar{f}_{\mathbf{X}_u,Y}(\mathbf{x}_u,\xi)=
\dfrac{1}{L h_Y {\displaystyle \prod_{i \in u} h_{X_i}}}\sum_{l=1}^L
K_Y \left( \dfrac{\xi-\xi^{(l)}}{h_Y} \right)
{\displaystyle \prod_{i \in u}
K_{X_i} \left( \dfrac{x_i-x_i^{(l)}}{h_{X_i}} \right)
},
\label{10b}
\end{equation}
\end{subequations}
in which $K_{X_i}:\mathbb{R} \to \mathbb{R}$, $i \in u$, and $K_Y:\mathbb{R} \to \mathbb{R}$ are univariate kernel functions, whereas $h_{X_i}$, $i \in u$, and $h_Y$ are smoothing parameters called the bandwidths. Substituting the probability densities in (\ref{9}) with their KDEs in (\ref{10a}) and (\ref{10b}) results in the KDE-MC estimator
\begin{equation}
\bar{H}_{u,f}^{(L)} := \dfrac{1}{L}\sum_{l=1}^L
f\left(
\dfrac{\bar{f}_{Y}(\xi^{(l)}){f}_{\mathbf{X}_u}(\mathbf{x}_u^{(l)})}
{\bar{f}_{\mathbf{X}_u,Y}(\mathbf{x}_u^{(l)},\xi^{(l)})}
\right)
\label{11}
\end{equation}
of $H_{u,f}$.  It is well known that the asymptotic mean-squared error committed by the KDE increases with the bandwidth size but decreases in the product of bandwidth and sample sizes.  Therefore, for the KDE error to decline as $L \to \infty$, the bandwidth must decrease, but not at a rate faster than the sample size.  This is sufficient to establish pointwise convergence of the KDE.  In which case, the $\bar{H}_{u,f}^{(L)}$ in (\ref{11}) should furnish a good approximation of $H_{u,f}$ if $L$ is sufficiently large.

The MC and KDE-MC methods presented so far both require $L$ evaluations of the response function $y$ to estimate the $f$-sensitivity index.  Therefore, their computational costs, measured in terms of numbers of function evaluations alone, are the same. If the sample size $L$, concomitant with a required accuracy in estimating the sensitivity index, is very large, say, in the order of millions, then both methods will be restricted to problems or functions that are inexpensive to evaluate.  In a practical setting, however, the response function $y$ is often determined via time-consuming finite-element analysis (FEA) or similar numerical calculations.  In which case, an arbitrarily large sample size is no longer viable, and hence alternative routes to estimating the probability densities in (\ref{9}) should be charted.

\subsection{The PDD-KDE-MC method}
When the response of a complex system is expensive to evaluate, mathematically rigorous yet computationally efficient surrogate approximations can be applied for sensitivity analysis.  Modern surrogate approximations include polynomial chaos expansion (PCE) \cite{wiener38}, stochastic collocation \cite{babuska07}, and PDD \cite{rahman09,rahman08}, to name a few. All of these methods, commonly used for uncertainty quantification of complex systems, are known to offer significant computational advantages over crude Monte Carlo simulation. However, for truly high-dimensional problems, the PCE and collocation methods require astronomically large numbers of terms or coefficients, succumbing to the curse of dimensionality. The PDD, derived from the ANOVA decomposition, also reduces the computational effort, but more importantly, it deflates the curse of dimensionality to an extent determined by the degree of interaction among input variables \cite{rahman09,rahman08}.  This was the principal motivation for coupling PDD with the KDE-MC method described in the preceding subsection. The end product is an extended version, which is referred to as the PDD-KDE-MC method in this paper.

\subsubsection{PDD approximation}
Let $\mathcal{L}_{2}(\Omega,\mathcal{F},P)$
represent a Hilbert space of square-integrable functions $y$ with
respect to the generic probability measure $f_{\mathbf{X}}(\mathbf{x})d\mathbf{x}$
supported on $\mathbb{R}^{N}$. For a given $\emptyset\ne u\subseteq\{1,\cdots,N\}$, let $\{\psi_{u\mathbf{j}_{|u|}}(\mathbf{X}_{u});\:\mathbf{j}_{|u|}\in\mathbb{N}_{0}^{|u|}\}$, where $\mathbf{j}_{|u|}=(j_{1},\cdots,j_{|u|})\in\mathbb{N}_{0}^{|u|}$
is a $|u|$-dimensional multi-index, represent a set of multivariate orthonormal polynomials that is consistent with the probability measure of $\mathbf{X}_{u}$. The PDD of the function $y$ represents a finite, hierarchical expansion \cite{rahman09,rahman08},
\begin{equation}
y(\mathbf{X})=y_{\emptyset}+{\displaystyle \sum_{\emptyset\ne u\subseteq\{1,\cdots,N\}}}\:\sum_{{\textstyle {\mathbf{j}_{|u|}\in\mathbb{N}_{0}^{|u|}\atop j_{1},\cdots,j_{|u|}\neq0}}}C_{u\mathbf{j}_{|u|}}\psi_{u\mathbf{j}_{|u|}}(\mathbf{X}_{u}),
\label{12}
\end{equation}
in terms of random multivariate orthonormal polynomials
of input variables with increasing dimensions, where
\begin{subequations}
\begin{equation}
y_{\emptyset} := \int_{\mathbb{R}^{N}}y(\mathbf{x})f_{\mathbf{X}}(\mathbf{x})d\mathbf{x}
\label{13a}
\end{equation}
and
\begin{equation}
C_{u\mathbf{j}_{|u|}}:=\int_{\mathbb{R}^{N}}y(\mathbf{x})\psi_{u\mathbf{j}_{|u|}}(\mathbf{\mathbf{x}}_{u})f_{\mathbf{X}}(\mathbf{x})d\mathbf{x},\;\emptyset\ne u\subseteq\{1,\cdots,N\},\;\mathbf{j}_{|u|}\in\mathbb{N}_{0}^{|u|},
\label{13b}
\end{equation}
\end{subequations}
are various expansion coefficients.  Note that the summation in (\ref{12}) precludes $j_{1},\cdots,j_{|u|}=0$, that is, the individual degree of each variable $X_{i}$ in $\psi_{u\mathbf{j}_{|u|}}$, $i\in u$, cannot be \emph{zero} since $\psi_{u\mathbf{j}_{|u|}}$ is a strictly $|u|$-variate function and has a \emph{zero} mean \cite{rahman09,rahman08}.  The expression of $y_{\emptyset}$ in (\ref{13a}) is valid whether or not the random input $\mathbf{X}$ comprises independent or dependent variables.  However, the expression of $C_{u\mathbf{j}_{|u|}}$ in (\ref{13b}) is applicable only when the input variables are independent.  Although (\ref{12}) provides an exact representation, it contains an infinite number of coefficients, emanating from infinite numbers of orthonormal polynomials. In practice, the number of coefficients must be finite, say, by retaining at most $m$th-order polynomials in each variable. Furthermore, in many applications, the function
$y$ can be approximated by a sum of at most $S$-variate component functions, where $1\le S\le N$ is another truncation parameter, resulting in the $S$-variate, $m$th-order PDD approximation
\begin{equation}
\tilde{y}_{S,m}(\mathbf{X}) = y_{\emptyset}+{\displaystyle \sum_{{\textstyle {\emptyset\ne u\subseteq\{1,\cdots,N\}\atop 1\le|u|\le S}}}}\:\sum_{{\textstyle {\mathbf{j}_{|u|}\in\mathbb{N}_{0}^{|u|},\left\Vert \mathbf{j}_{|u|}\right\Vert _{\infty}\le m\atop j_{1},\cdots,j_{|u|}\neq0}}}C_{u\mathbf{j}_{|u|}}\psi_{u\mathbf{j}_{|u|}}(\mathbf{X}_{u}),
\label{14}
\end{equation}
which includes interactive effects of at most $S$ input variables $X_{i_{1}},\cdots,X_{i_{S}}$, $1\le i_{1}\le\cdots\le i_{S}\le N$, on $y$. Here, $\Vert \mathbf{j}_{|u|}\Vert _{\infty}:=\max (j_{i_1},\cdots,j_{i_{|u|}})$ represents the $\infty$-norm of $\mathbf{j}_{|u|}$. For instance, by selecting $S=1$ and $S=2$, the functions $\tilde{y}_{1,m}(\mathbf{X})$ and $\tilde{y}_{2,m}(\mathbf{X})$,
respectively, provide univariate and bivariate $m$th-order approximations,
contain contributions from all input variables, and should not be
viewed as first- and second-order approximations, nor do they limit
the nonlinearity of $y(\mathbf{X})$. Depending on how the
component functions are constructed, arbitrarily high-order univariate
and bivariate terms of $y(\mathbf{X})$ could be lurking
inside $\tilde{y}_{1,m}(\mathbf{X})$ and $\tilde{y}_{2,m}(\mathbf{X})$.
The fundamental conjecture underlying this decomposition is that the
component functions arising in the function decomposition will exhibit
insignificant $S$-variate interactions when $S\to N$, leading
to useful lower-variate approximations of $y(\mathbf{X})$.
When $S\to N$ and $m\to\infty$, $\tilde{y}_{S,m}(\mathbf{X})$
converges to $y(\mathbf{X})$ in the mean-square sense, that is, (\ref{14}) generates a hierarchical and convergent sequence of approximations of $y(\mathbf{X})$.  Further details of PDD are available elsewhere \cite{rahman09,rahman08}.

\subsubsection{Estimator}
Let $\tilde{Y}_{S,m}:=\tilde{y}_{S,m}(\mathbf{X})$ define the $S$-variate, $m$th-order PDD approximation of the output random variable $Y$.  Given the PDD truncation parameters $S$, $m$, and a sample size $L \in \mathbb{N}$, let $\{\mathbf{x}^{(l)},\tilde{\xi}_{S,m}^{(l)}\}_{l=1,\cdots,L}$ be the set of input-output sample pairs of $(\mathbf{X},\tilde{Y}_{S,m})$, where the output samples are calculated from (\ref{14}).  Using this surrogate sample set, the KDEs of the probability densities of $\tilde{Y}_{S,m}$ and $(\mathbf{X}_u,\tilde{Y}_{S,m})$ are obtained as
\begin{subequations}
\begin{equation}
\bar{f}_{\tilde{Y}_{S,m}}(\xi)=\dfrac{1}{L h_Y}\sum_{l=1}^L
K_Y \left( \dfrac{\xi-\tilde{\xi}_{S,m}^{(l)}}{h_Y} \right)
\label{15a}
\end{equation}
and
\begin{equation}
\bar{f}_{\mathbf{X}_u,\tilde{Y}_{S,m}}(\mathbf{x}_u,\xi)=
\dfrac{1}{L h_Y {\displaystyle \prod_{i \in u} h_{X_i}}}\sum_{l=1}^L
K_Y \left( \dfrac{\xi-\tilde{\xi}_{S,m}^{(l)}}{h_Y} \right)
{\displaystyle \prod_{i \in u}
K_{X_i} \left( \dfrac{x_i-x_i^{(l)}}{h_{X_i}} \right)
},
\label{15b}
\end{equation}
\end{subequations}
respectively, which are similar to Equations (\ref{10a}) and (\ref{10b}), but use instead the PDD approximation $\tilde{y}_{S,m}$ of $y$. Substituting the probability densities in Equation (\ref{9}) with their KDEs in (\ref{15a}) and (\ref{15b}) results in the PDD-KDE-MC estimator
\begin{equation}
\tilde{H}_{u,f}^{(L,S,m)} := \dfrac{1}{L}\sum_{l=1}^L
f\left(
\dfrac{\bar{f}_{Y}({\tilde{\xi}_{S,m}}^{(l)}){f}_{\mathbf{X}_u}(\mathbf{x}_u^{(l)})}
{\bar{f}_{\mathbf{X}_u,Y}(\mathbf{x}_u^{(l)},{\tilde{\xi}_{S,m}}^{(l)})}
\right)
\label{16}
\end{equation}
of $H_{u,f}$. From the well-known mean-square convergence properties \cite{rahman09,rahman08}, the sequence of PDD approximations $\tilde{y}_{S,m}$ also converges to $y$ in probability and in distribution as $S \to N$ and $m \to \infty$.  Therefore, the estimator $\tilde{H}_{u,f}^{(L,S,m)}$ in (\ref{16}) is anticipated to deliver a good approximation of ${H}_{u,f}$ when $L$ is sufficiently large, provided that the PDD truncation parameters $S$ and $m$ are chosen wisely.

The output samples from the PDD approximation should not be confused with those generated from the original function. The MC and KDE-MC methods, which require numerical calculations of $y$ for input samples, can be computationally expensive or even prohibitive, particularly when the sample size needs to be very large. In contrast, the samples generated from the PDD approximation $\tilde{y}_{S,m}$ entail inexpensive evaluations of simple polynomial functions. Therefore, a relatively large sample size can be accommodated in the PDD-KDE-MC method even when $y$ is expensive to evaluate.

It is important to emphasize that the PDD in (\ref{12}) and its truncation in (\ref{14}) are also valid for dependent random input, provided that the probability measure of $\mathbf{X}$ satisfies a few mild regulatory conditions \cite{rahman14}.  However, for the dependent input, not described here in detail for brevity, the evaluation of the expansion coefficients $C_{u\mathbf{j}_{|u|}}$ requires solving a system of linear matrix equations, where the construction of the coefficient matrix and coefficient vector entails a few $N$-dimensional integrals that are similar to the one in (\ref{13b}). See the author's recent work for further details \cite{rahman14}.

\subsection{ A few remarks}
It is important to note that the MC, KDE-MC, and PDD-KDE-MC methods proposed constitute single-loop (single simulation) samplings as opposed to double-loop computations required by several existing methods \cite{borgonovo07,liu09,wei13}. Therefore, the proposed estimators should be markedly more efficient than the double-loop methods even when computing existing sensitivity indices.

In all three methods presented, the input probability density $f_{\mathbf{X}}(\mathbf{x})$ of $\mathbf{X}$ is assumed to be known, so that the marginal density $f_{\mathbf{X}_u}(\mathbf{x}_u)$ of $\mathbf{X}_u$ for any $\emptyset\neq u\subseteq\{1,\cdots,N\}$ is exactly determined.  This assumption is commonly invoked or fulfilled in stochastic modeling and simulation where the objectives are propagating input uncertainties and hence determining the probabilistic characteristics of an output response.  However, there are also data-driven stochastic problems, where raw input data, generated from either physical testing or field measurements, are supplied.  In which case, the input density must also be estimated, say, by employing KDE to generate the approximate density $\bar{f}_{\mathbf{X}_u}(\mathbf{x}_u)$, therefore, imparting an added layer of approximation to all three methods.

From the central limit theorem, there exists a well-known probabilistic error bound of crude Monte Carlo simulation, revealing a slow convergence rate of $O(L^{-1/2})$, but independent of the dimension of the integral.  Therefore, the variance reduction techniques, such as importance sampling, stratified sampling, correlated sampling, and others, can be used to improve the efficiency of crude Monte Carlo simulation in all three methods.  In addition, if the convex function $f$ confers adequate smoothness to the integrand, then quasi-Monte Carlo sampling can be used instead of crude Monte Carlo sampling, accelerating the convergence rate, at most, to $O(L^{-1})$.

The KDE of probability densities required in the last two methods is known to suffer from the curse of dimensionality. Therefore, the methods presented here are limited to low-variate sensitivity indices $H_{u,f}$, that is, when $|u|$ is not overly large.  For an arbitrarily large $|u|$, the KDE becomes questionable, and alternative means of estimating probability densities, such as sparse-grid approximation \cite{peherstorfer12}, should be explored.  In addition, since only the ratio of probability densities is involved, more robust estimates of sensitivity indices are possible by invoking maximum likelihood estimation and others \cite{sugiyama12}.

\section{Numerical examples}
\subsection{Example 1}
The first example involves a linear transformation
\begin{equation}
y(\mathbf{X}) = X_1 + 1.1 X_2 + 1.2 X_3 + 1.3 X_4 + 1.4 X_5 + 1.5 X_6
\label{17}
\end{equation}
of six standard Gaussian random variables $X_i$, $i=1,\cdots,6$, which are independent and identically distributed with \emph{zero} means and \emph{unit} variances.  The larger the coefficient of a variable, the higher the importance of that variable.  Therefore, $X_1$ and $X_6$ are the least important and most important variables, respectively, in this problem.

Since the function $y$ in (\ref{17}) is a linear combination of Gaussian random variables, finding the exact probability density functions required to calculate $f$-sensitivity indices is elementary.  Figures \ref{figure1}(a) and \ref{figure1}(d) present two such exact density functions, the marginal density $f_Y(\xi)$ of $Y$ and the joint density $f_{X_1,Y}(x_1,\xi)$ of $(X_1,Y)$, respectively.  The joint densities involving five other input variables are similar and are, therefore, not shown or discussed. Figure \ref{figure1}(a) also depicts a comparison of the exact marginal density $f_Y(\xi)$ with the approximate marginal densities $\bar{f}_Y(\xi)$ obtained from KDE for two sample sizes: $L=10^4$ and $L=10^8$. The approximate joint densities $\bar{f}_{X_1,Y}(x_1,\xi)$, also generated from KDE using these two aforementioned sample sizes, are exhibited in Figures \ref{figure1}(b) and \ref{figure1}(c).  When the sample size increases, all KDE-generated densities, whether marginal or joint, approach the respective exact densities as expected.  In fact, the approximate and exact densities for $L=10^8$ are virtually indiscernible to the naked eye.

\begin{figure}
\begin{centering}
\includegraphics[scale=0.8]{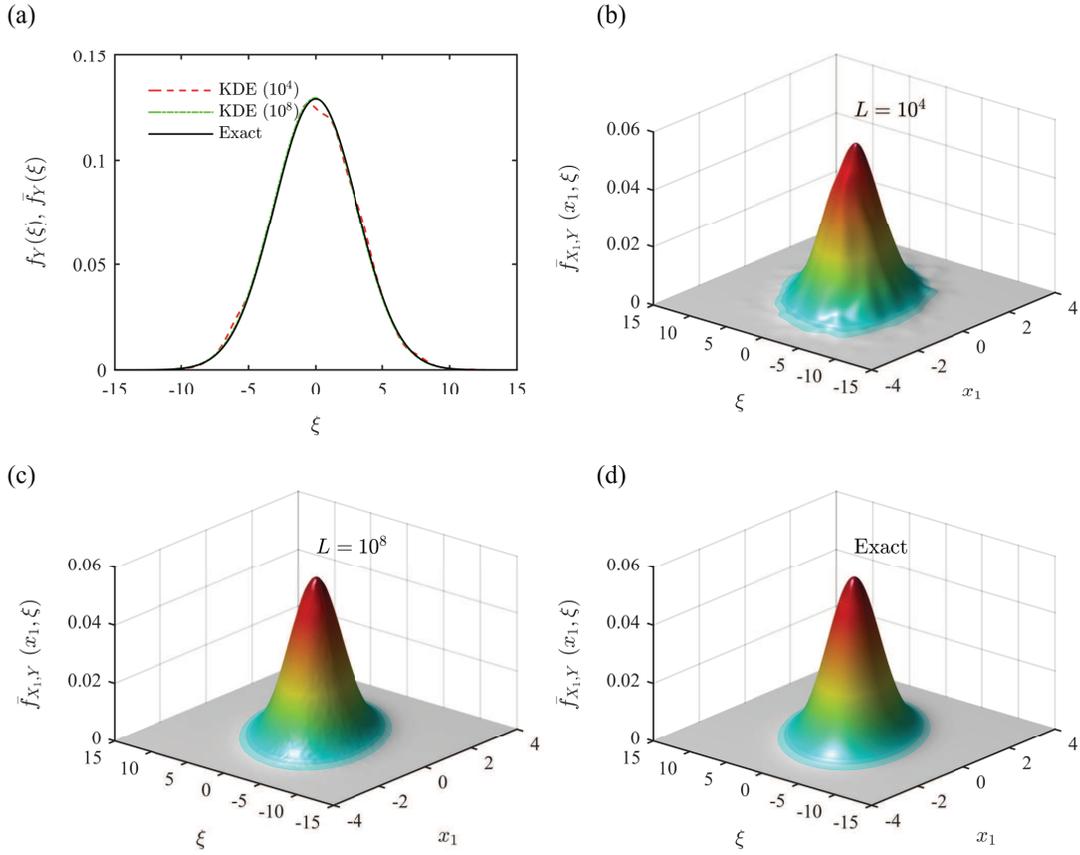}
\par\end{centering}
\caption{
Various probability density functions for independent random input in Example 1; (a) exact and KDE-generated marginal densities of $Y$; (b) KDE-generated joint density of $(X_1,Y)$ for $L=10^4$; (c) KDE-generated joint density of $(X_1,Y)$ for $L=10^8$; (d) exact joint density of $(X_1,Y)$.
}
\label{figure1}
\end{figure}

\begin{figure}
\begin{centering}
\includegraphics[scale=0.78]{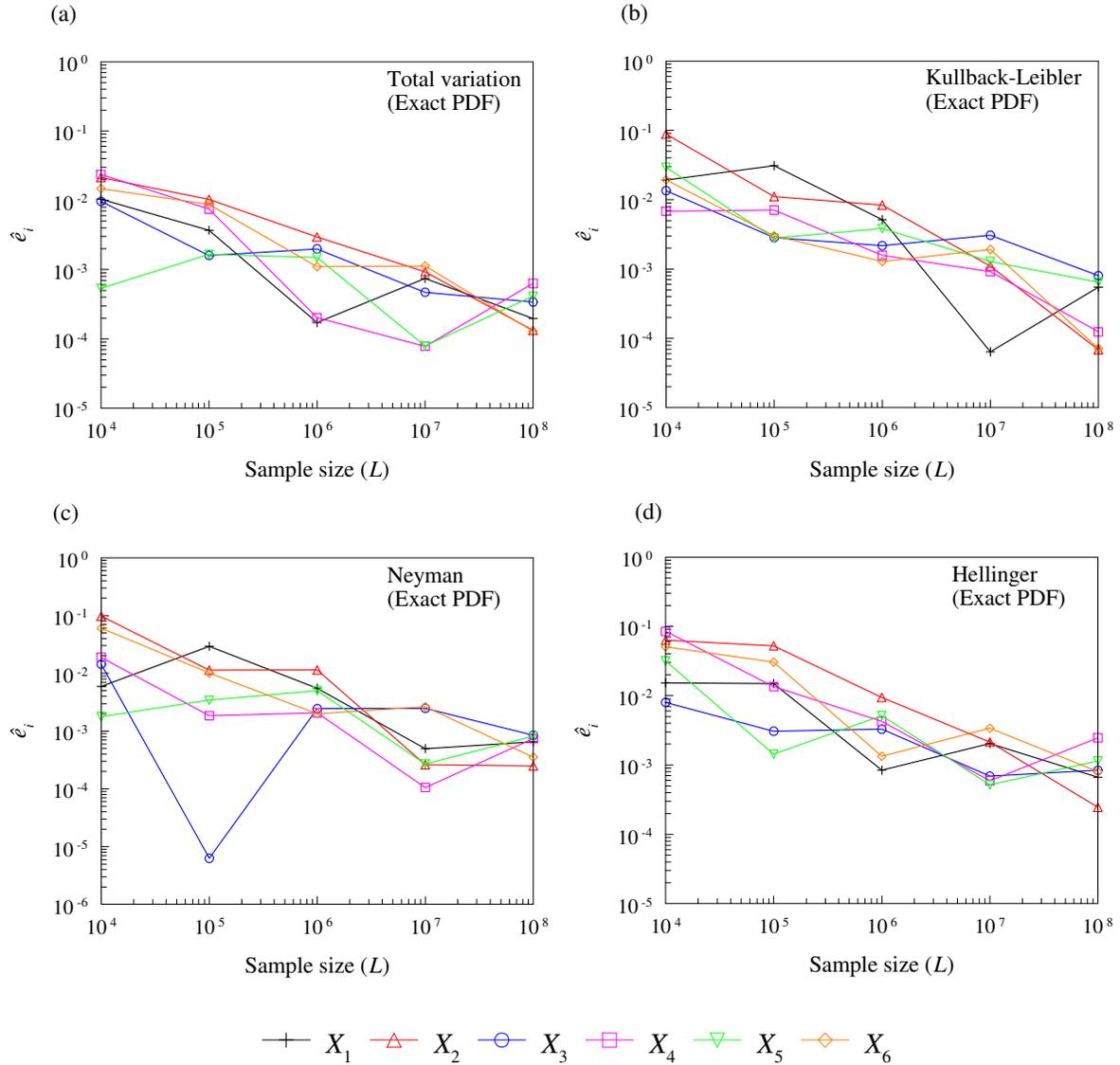}
\par\end{centering}
\caption{
${\cal L}_{1}$ errors committed by the MC method in estimating four variants of sensitivity indices for independent random input in Example 1; (a) $H_{\{i\},TV}$; (b) $H_{\{i\},KL'}$; (c) $H_{\{i\},N}$; (d) $H_{\{i\},H}$.}
\label{figure2}
\end{figure}

Figures \ref{figure2}(a) through \ref{figure2}(d) and Figures \ref{figure3}(a) through \ref{figure3}(d) display respectively the ${\cal L}_{1}$ errors $\hat{e}_i:=|H_{\{i\},f}-\hat{H}_{\{i\},f}^{(L)}|/H_{\{i\},f}$ by the MC method and $\bar{e}_i:=|H_{\{i\},f}-\bar{H}_{\{i\},f}^{(L)}|/H_{\{i\},f}$
by the KDE-MC method, in estimating the following four variants of univariate $f$-sensitivity indices for all six variables: (1) $H_{\{i\},TV}$ (total variational distance); (2) $H_{\{i\},KL'}$ (reversed Kullback-Leibler divergence); (3) $H_{\{i\},N}$ (Neyman $\chi^2$ divergence); and (4)  $H_{\{i\},H}$ (Hellinger distance).  However, calculating the sensitivity indices exactly is difficult, if not impossible.  Therefore, the reference solutions of $H_{\{i\},f}$ in calculating the errors were obtained using exact probability densities $f_Y(\xi)$ and $f_{X_i,Y}(x_i,\xi)$ in (\ref{8a}) through (\ref{8d}) and subsequent numerical integration.  The errors emanating from both methods drop with the sample size regardless of the variant of sensitivity indices examined.  However, for a given sample size, the errors committed by the MC method in general are lower than those perpetrated by the KDE-MC method.  This is due to kernel density estimations in the latter method.

\begin{figure}
\begin{centering}
\includegraphics[scale=0.78]{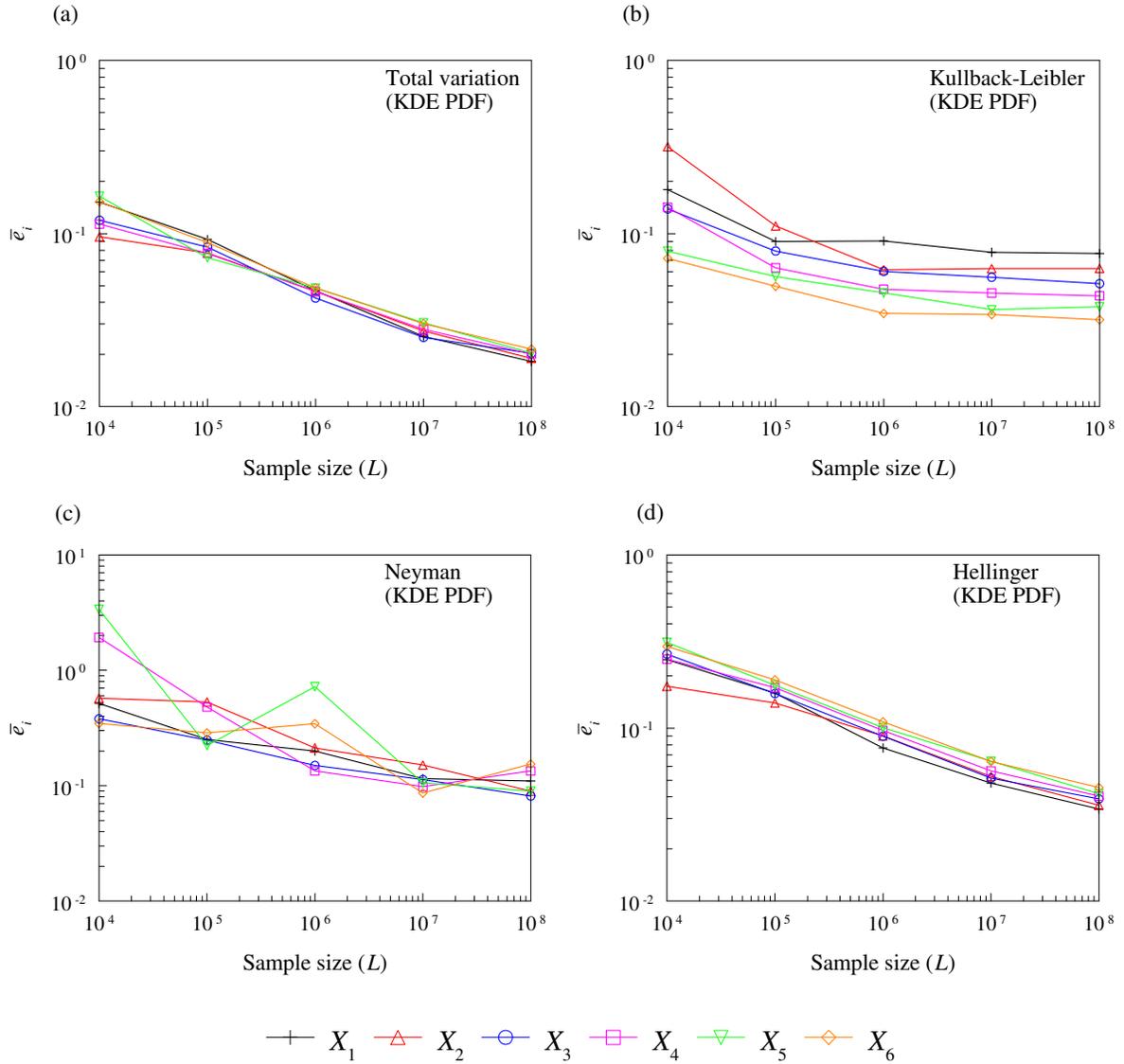}
\par\end{centering}
\caption{
${\cal L}_{1}$ errors committed by the KDE-MC method in estimating four variants of sensitivity indices for independent random input in Example 1; (a) $H_{\{i\},TV}$; (b) $H_{\{i\},KL'}$; (c) $H_{\{i\},N}$; (d) $H_{\{i\},H}$.
}
\label{figure3}
\end{figure}

Although the sensitivity analysis performed so far has adopted independence of input variables, the definition and calculation of $f$-sensitivity indices do not preclude dependent input variables. To corroborate this claim, consider the same function $y$ in (\ref{17}), but, this time, $y$ is subjected to a correlated Gaussian random input vector $\mathbf{X}=(X_1,\cdots,X_6) \in \mathbb{R}^6$ with a \emph{zero} mean and a covariance matrix $\mathbb{E}[X_i X_j]=\exp(-c|i-j|)$, $i,j=1,\cdots,6$, where the correlation parameter $c=1$. The MC and KDE-MC methods with a sample size of $L=10^8$ were employed to perform sensitivity analysis for this dependent input. The estimates of the same four variants of univariate sensitivity indices for all six variables are presented in Table \ref{table2}.  When compared with the results of numerical integration, also possible to obtain for dependent variables, both methods yield excellent sensitivity estimates in almost all cases. However, there are a few exceptions, such as those when the indices rooted in the Neyman $\chi^2$ divergence for the fifth and sixth variables are overpredicted by the KDE-MC method more than others. Again, this is due to the approximation error in the density estimation by the KDE.  Nonetheless, all four sensitivity analyses, regardless of the method, reach the same conclusion: the fourth and fifth variables are most important, whereas the first variable is least important.  Among the remaining three variables with intermediary significance, the third variable is relatively more important than either the second or the sixth variable. This is in contrast with the monotonically increasing significance of independent input variables. Indeed, the statistical dependence chosen --- in this case, an exponentially decaying correlation --- has altered the order of importance of input random variables.

\begin{table}
\caption{Estimates of sensitivity indices by the MC and KDE-MC methods for dependent random input in Example 1}
\begin{centering}
\begin{tabular}{cccccccc}
\hline
 & \multicolumn{3}{c}{{\footnotesize{}$\begin{array}{c}
{}\hat{H}_{\{i\},TV}^{(L)},\bar{H}_{\{i\},TV}^{(L)} \\
(\mathrm{total\: variational\: distance})
\end{array}$}} &  & \multicolumn{3}{c}{{\footnotesize{}$\begin{array}{c}
{}\hat{H}_{\{i\},KL'}^{(L)},\bar{H}_{\{i\},KL'}^{(L)} \\
(\mathrm{Kullback\: Leibler\:\mathrm{divergence}})
\end{array}$}}\tabularnewline
\cline{2-4} \cline{6-8}
{\footnotesize{}$\begin{array}{c}
\mathrm{Random}\\
\mathrm{variable}
\end{array}$} & {\footnotesize{}$\begin{array}{c}
\mathrm{MC}\\
\mathrm{method}
\end{array}$} & {\footnotesize{}$\begin{array}{c}
\mathrm{KDE-MC}\\
\mathrm{method}
\end{array}$} & {\footnotesize{}$\begin{array}{c}
\mathrm{Num.}\\
\mathrm{integ.}
\end{array}$} &  & {\footnotesize{}$\begin{array}{c}
\mathrm{MC}\\
\mathrm{method}
\end{array}$} & {\footnotesize{}$\begin{array}{c}
\mathrm{KDE-MC}\\
\mathrm{method}
\end{array}$} & {\footnotesize{}$\begin{array}{c}
\mathrm{Num.}\\
\mathrm{integ.}
\end{array}$}\tabularnewline
\hline
{\footnotesize{}$X_{1}$} & {\footnotesize{}0.278} & {\footnotesize{}0.272} & {\footnotesize{}0.278} &  & {\footnotesize{}0.086} & {\footnotesize{}0.090} & {\footnotesize{}0.086}\tabularnewline
{\footnotesize{}$X_{2}$} & {\footnotesize{}0.388} & {\footnotesize{}0.379} & {\footnotesize{}0.388} &  & {\footnotesize{}0.157} & {\footnotesize{}0.162} & {\footnotesize{}0.157}\tabularnewline
{\footnotesize{}$X_{3}$} & {\footnotesize{}0.463} & {\footnotesize{}0.451} & {\footnotesize{}0.463} &  & {\footnotesize{}0.215} & {\footnotesize{}0.219} & {\footnotesize{}0.215}\tabularnewline
{\footnotesize{}$X_{4}$} & {\footnotesize{}0.514} & {\footnotesize{}0.497} & {\footnotesize{}0.512} &  & {\footnotesize{}0.256} & {\footnotesize{}0.260} & {\footnotesize{}0.256}\tabularnewline
{\footnotesize{}$X_{5}$} & {\footnotesize{}0.513} & {\footnotesize{}0.499} & {\footnotesize{}0.514} &  & {\footnotesize{}0.258} & {\footnotesize{}0.262} & {\footnotesize{}0.258}\tabularnewline
{\footnotesize{}$X_{6}$} & {\footnotesize{}0.411} & {\footnotesize{}0.401} & {\footnotesize{}0.411} &  & {\footnotesize{}0.174} & {\footnotesize{}0.178} & {\footnotesize{}0.174}\tabularnewline
\hdashline
 & \multicolumn{3}{c}{{\footnotesize{}$\begin{array}{c}
{}\hat{H}_{\{i\},N}^{(L)},\bar{H}_{\{i\},N}^{(L)} \\
(\mathrm{Neyman\:\chi^{2}\: divergence})
\end{array}$}} &  & \multicolumn{3}{c}{{\footnotesize{}$\begin{array}{c}
{}\hat{H}_{\{i\},H}^{(L)},\bar{H}_{\{i\},H}^{(L)} \\
(\mathrm{Hellinger\: distance})
\end{array}$}}\tabularnewline
\cline{2-4} \cline{6-8}
 & {\footnotesize{}$\begin{array}{c}
\mathrm{MC}\\
\mathrm{method}
\end{array}$} & {\footnotesize{}$\begin{array}{c}
\mathrm{KDE-MC}\\
\mathrm{method}
\end{array}$} & {\footnotesize{}$\begin{array}{c}
\mathrm{Num.}\\
\mathrm{integ.}
\end{array}$} &  & {\footnotesize{}$\begin{array}{c}
\mathrm{MC}\\
\mathrm{method}
\end{array}$} & {\footnotesize{}$\begin{array}{c}
\mathrm{KDE-MC}\\
\mathrm{method}
\end{array}$} & {\footnotesize{}$\begin{array}{c}
\mathrm{Num.}\\
\mathrm{integ.}
\end{array}$}\tabularnewline
\hline
{\footnotesize{}$X_{1}$} & {\footnotesize{}0.187} & {\footnotesize{}0.205} & {\footnotesize{}0.187} &  & {\footnotesize{}0.045} & {\footnotesize{}0.043} & {\footnotesize{}0.045}\tabularnewline
{\footnotesize{}$X_{2}$} & {\footnotesize{}0.369} & {\footnotesize{}0.399} & {\footnotesize{}0.370} &  & {\footnotesize{}0.086} & {\footnotesize{}0.081} & {\footnotesize{}0.086}\tabularnewline
{\footnotesize{}$X_{3}$} & {\footnotesize{}0.536} & {\footnotesize{}0.575} & {\footnotesize{}0.536} &  & {\footnotesize{}0.120} & {\footnotesize{}0.113} & {\footnotesize{}0.120}\tabularnewline
{\footnotesize{}$X_{4}$} & {\footnotesize{}0.665} & {\footnotesize{}0.724} & {\footnotesize{}0.668} &  & {\footnotesize{}0.147} & {\footnotesize{}0.135} & {\footnotesize{}0.145}\tabularnewline
{\footnotesize{}$X_{5}$} & {\footnotesize{}0.675} & {\footnotesize{}0.876} & {\footnotesize{}0.674} &  & {\footnotesize{}0.145} & {\footnotesize{}0.137} & {\footnotesize{}0.146}\tabularnewline
{\footnotesize{}$X_{6}$} & {\footnotesize{}0.416} & {\footnotesize{}0.575} & {\footnotesize{}0.416} &  & {\footnotesize{}0.096} & {\footnotesize{}0.091} & {\footnotesize{}0.095}\tabularnewline
\hline
\end{tabular}
\par\end{centering}
\label{table2}
\end{table}

\subsection{Example 2}
The next example originates from a probabilistic risk assessment model, addressed by Iman \cite{iman87}, to determine the importance of input random variables.  The model response is expressed by
\begin{equation}
\begin{array}{rcl}
y(\mathbf{X}) & = & X_1 X_3 X_5 + X_1 X_3 X_6 + X_1 X_4 X_5 + X_1 X_4 X_6 + X_2 X_3 X_4  \\
              &   & + X_2 X_3 X_5 + X_2 X_4 X_5 + X_2 X_5 X_6 + X_2 X_4 X_7 + X_2 X_6 X_7,
\end{array}
\label{18}
\end{equation}
where $X_i$, $i=1,\cdots,7$, are seven independent and identically distributed lognormal random variables with their statistical properties described in Table \ref{table3}. Borgonovo \cite{borgonovo07}, Liu and Homma \cite{liu09}, and Wei, Lu, and Yuan \cite{wei13} analyzed the function $y$ in (\ref{18}), calculating the total variational sensitivity indices from various approximate techniques.

\begin{table}
\caption{Statistical properties of input variables in Example 2}
\begin{centering}
\begin{tabular}{cccc}
\hline
{\footnotesize{Random variable}} & {\footnotesize{Probability distribution}} & {\footnotesize{Mean}} & {\footnotesize{Error factor}}\tabularnewline
\hline
{\footnotesize{$X_{1}$}} & {\footnotesize{Lognormal}} & {\footnotesize{2}} & {\footnotesize{2}}\tabularnewline
{\footnotesize{$X_{2}$}} & {\footnotesize{Lognormal}} & {\footnotesize{3}} & {\footnotesize{2}}\tabularnewline
{\footnotesize{$X_{3}$}} & {\footnotesize{Lognormal}} & {\footnotesize{0.001}} & {\footnotesize{2}}\tabularnewline
{\footnotesize{$X_{4}$}} & {\footnotesize{Lognormal}} & {\footnotesize{0.002}} & {\footnotesize{2}}\tabularnewline
{\footnotesize{$X_{5}$}} & {\footnotesize{Lognormal}} & {\footnotesize{0.004}} & {\footnotesize{2}}\tabularnewline
{\footnotesize{$X_{6}$}} & {\footnotesize{Lognormal}} & {\footnotesize{0.005}} & {\footnotesize{2}}\tabularnewline
{\footnotesize{$X_{7}$}} & {\footnotesize{Lognormal}} & {\footnotesize{0.003}} & {\footnotesize{2}}\tabularnewline
\hline
\end{tabular}
\par\end{centering}
\label{table3}
\end{table}

\begin{table}
\caption{Various estimates of scaled sensitivity indices from the total variational distance in Example 2}
\begin{centering}
{\footnotesize{}}%
\begin{tabular}{cccccc}
\hline
{\footnotesize{$\begin{array}{c}
\mathrm{}\\
\mathrm{Random}\\
\mathrm{variable}
\end{array}$}} & \!\!\!\!\!{\footnotesize{$\begin{array}{c}
\mathrm{KDE-MC}\\
\mathrm{method}\\
(L=10^{4})
\end{array}$}} & \!\!\!\!\!{\footnotesize{$\begin{array}{c}
\mathrm{KDE-MC}\\
\mathrm{method}\\
(L=10^{6})
\end{array}$}} & \!\!\!\!\!{\footnotesize{$\begin{array}{c}
\mathrm{\mathrm{Borgonovo's}}\\
\mathrm{method}\\{}
\cite{borgonovo07}
\end{array}$}} & \!\!\!\!\!{\footnotesize{$\begin{array}{c}
\mathrm{\mathrm{Liu\:\&\: Homma's}}\\
\mathrm{PDF-based}\\
\mathrm{method}\:\cite{liu09}
\end{array}$}} & \!\!\!\!\!{\footnotesize{$\begin{array}{c}
\mathrm{\mathrm{Liu\:\&\: Homma's}}\\
\mathrm{CDF-based}\\
\mathrm{method}\:\cite{liu09}
\end{array}$}}\tabularnewline
\hline
{\footnotesize{$X_{1}$}} & \!\!\!\!\!{\footnotesize{0.07 (6)}} & \!\!\!\!\!{\footnotesize{0.07 (6)}} & \!\!\!\!\!{\footnotesize{0.11 (6)}} & \!\!\!\!\!{\footnotesize{0.07 (6)}} & \!\!\!\!\!{\footnotesize{0.10 (6)}}\tabularnewline
{\footnotesize{$X_{2}$}} & \!\!\!\!\!{\footnotesize{0.19 (1)}} & \!\!\!\!\!{\footnotesize{0.21 (1)}} & \!\!\!\!\!{\footnotesize{0.17 (3)}} & \!\!\!\!\!{\footnotesize{0.23 (1)}} & \!\!\!\!\!{\footnotesize{0.24 (1)}}\tabularnewline
{\footnotesize{$X_{3}$}} & \!\!\!\!\!{\footnotesize{0.06 (7)}} & \!\!\!\!\!{\footnotesize{0.04 (7)}} & \!\!\!\!\!{\footnotesize{0.09 (7)}} & \!\!\!\!\!{\footnotesize{0.05 (7)}} & \!\!\!\!\!{\footnotesize{0.08 (7)}}\tabularnewline
{\footnotesize{$X_{4}$}} & \!\!\!\!\!{\footnotesize{0.10 (4)}} & \!\!\!\!\!{\footnotesize{0.10 (4)}} & \!\!\!\!\!{\footnotesize{0.13 (4)}} & \!\!\!\!\!{\footnotesize{0.11 (4)}} & \!\!\!\!\!{\footnotesize{0.13 (4)}}\tabularnewline
{\footnotesize{$X_{5}$}} & \!\!\!\!\!{\footnotesize{0.13 (3)}} & \!\!\!\!\!{\footnotesize{0.14 (3)}} & \!\!\!\!\!{\footnotesize{0.18 (2)}} & \!\!\!\!\!{\footnotesize{0.15 (3)}} & \!\!\!\!\!{\footnotesize{0.16 (3)}}\tabularnewline
{\footnotesize{$X_{6}$}} & \!\!\!\!\!{\footnotesize{0.15 (2)}} & \!\!\!\!\!{\footnotesize{0.16 (2)}} & \!\!\!\!\!{\footnotesize{0.20 (1)}} & \!\!\!\!\!{\footnotesize{0.18 (2)}} & \!\!\!\!\!{\footnotesize{0.19 (2)}}\tabularnewline
{\footnotesize{$X_{7}$}} & \!\!\!\!\!{\footnotesize{0.07 (5)}} & \!\!\!\!\!{\footnotesize{0.07 (5)}} & \!\!\!\!\!{\footnotesize{0.11 (5)}} & \!\!\!\!\!{\footnotesize{0.08 (5)}} & \!\!\!\!\!{\footnotesize{0.10 (5)}}\tabularnewline
\hline
\end{tabular}
\par\end{centering}
\label{table4}
\end{table}

Table \ref{table4} presents estimates of scaled univariate sensitivity indices, $\bar{H}_{\{i\},TV}^{(L)}/2$, $i=1,\cdots,7$, by the KDE-MC method for small and large sample sizes: $L=10^4$ and $L=10^6$. The indices, derived from the total variational distance, are divided by \emph{two} as per Borgonovo's importance measure.  Although the sensitivity estimates for most variables vary slightly with respect to the sample size, the relative rankings are identical, which are marked inside the parentheses in Table \ref{table4}.  For the sake of comparison, the estimates of sensitivity indices published by Borgonovo and Liu and Homma are also listed in Table \ref{table4}.  The estimates by Liu and Homma are further broken down according to the PDF- and CDF-based methods discussed in their paper.  It appears that the proposed KDE-MC method ($L=10^6$) and Liu and Homma's PDF-based method yield very close sensitivity estimates.  In contrast, Borgonovo's method and Liu and Homma's CDF-based method predict mostly larger sensitivity indices than those estimated by the KDE-MC method.  This is potentially due to a poor fit of classical distributions, employed by Borgonovo, to estimate the probability densities of the model response.  Nonetheless, the proposed KDE-MC method and both of Liu and Homma's methods lead to the same relative rankings, whereas a slight discrepancy exists in the rankings from Borgonovo's method. Although no numerical results of sensitivity indices are given, Wei, Lu, and Yuan reported the same relative rankings as derived from the proposed KDE-MC method.

\subsection{Example 3}
In the third example, consider the function
\begin{equation}
y(\mathbf{X})=\sin X_{1}+7\sin^{2}X_{2}+0.1X_{3}^{4}\sin X_{1},
\label{19}
\end{equation}
studied by Ishigami and Homma \cite{ishigami90}, where $X_i$, $i=1,2,3$, are three independent and identically distributed uniform random variables on $[-\pi,+\pi]$.  The function $y$ in (\ref{19}) with various coefficients has been widely used for variance-based global sensitivity analysis \cite{rahman11,ratto07,sobol93,tarantola06}.

\begin{figure}[!b]
\begin{centering}
\includegraphics[scale=0.8]{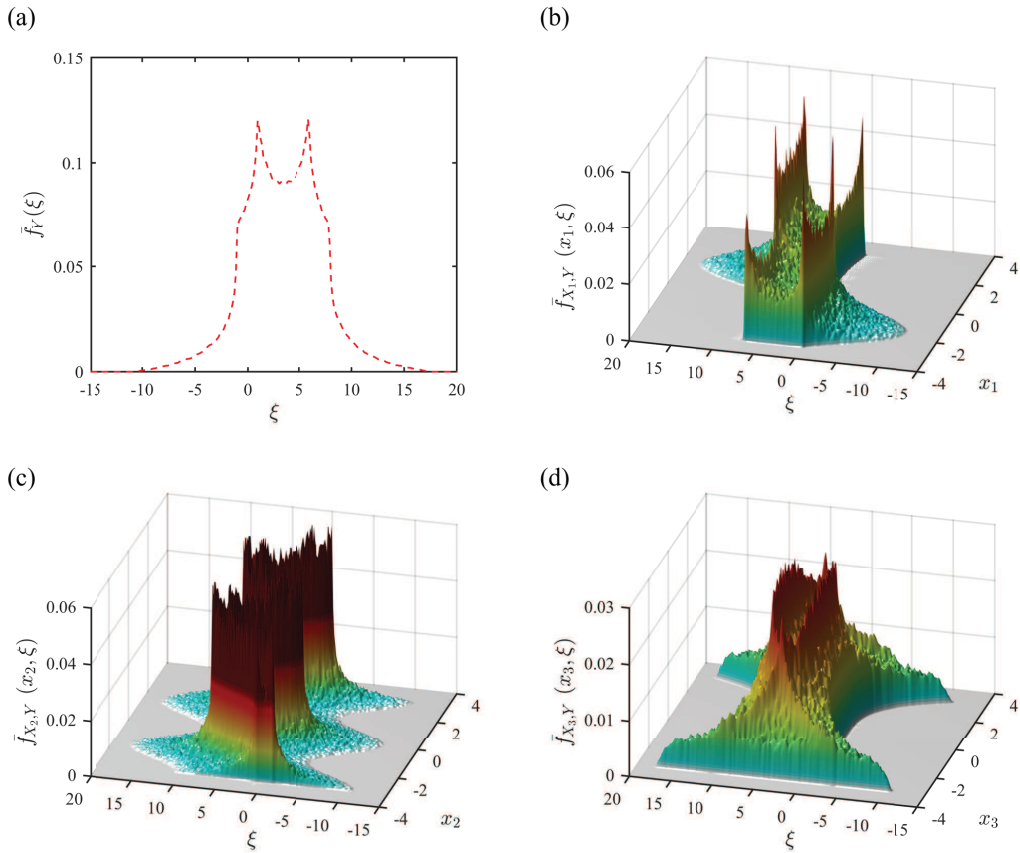}
\par\end{centering}
\caption{
KDE-generated probability density functions in Example 3 for $L=10^6$; (a) marginal density of $Y$; (b) joint density of $(X_1,Y)$; (c) joint density of $(X_2,Y)$; (d) joint density of $(X_3,Y)$.
}
\label{figure4}
\end{figure}

The KDE with three distinct sample sizes, $L=10^4, 10^5, 10^6$, was employed to generate the marginal probability density function $\bar{f}_Y(\xi)$ and three joint probability density functions  $\bar{f}_{X_1,Y}(x_1,\xi)$, $\bar{f}_{X_2,Y}(x_2,\xi)$, and $\bar{f}_{X_3,Y}(x_3,\xi)$. For brevity, the four density functions, obtained only for the largest sample size, that is, for $L=10^6$, are displayed in Figures \ref{figure4}(a) through \ref{figure4}(d). The density functions, especially the ones in Figures \ref{figure4}(b), \ref{figure4}(c) and \ref{figure4}(d), have complex features, demonstrating a need for large sample sizes to warrant adequate accuracy in their estimations.  The sample size, $L=10^6$, is sufficiently large, meaning that a larger sample size produces only negligible changes in the density functions.  Therefore, a global sensitivity analysis using $L=10^6$ should yield reliable estimates.

Table \ref{table5} presents estimates of two variants of density-based univariate sensitivity indices, $\bar{H}_{\{i\},TV}^{(L)}$ and $\bar{H}_{\{i\},KL'}^{(L)}$, which are derived from the total variational distance and reversed Kullback-Leibler divergence, respectively, and the variance-based Sobol indices.  The density-based sensitivity indices were estimated using the KDE-MC method for all three sample sizes, whereas the Sobol indices were obtained exactly.  The density-based sensitivity indices, whether $\bar{H}_{\{i\},TV}^{(L)}$ or $\bar{H}_{\{i\},KL'}^{(L)}$, level off as $L$ increases.  Regardless of the sample size, both variants of sensitivity analysis lead to the same conclusion: $X_2$ is the most important variable, followed by $X_3$ and $X_1$ or $X_1$ and $X_3$, depending on which type of distance or divergence is employed or appropriate. The relative rankings from Sobol univariate (first-order) indices, which are strictly variance-based, also find $X_2$ to be the most important variable.  However, the Sobol total indices tell a different tale: $X_1$ and $X_3$ are the most important and least important variables, respectively, whereas the significance of $X_2$ is intermediary.  This is because the interaction effect of $X_1$ and $X_3$ is accounted for in the Sobol total indices.  All other indices, whether density- or variance-based, capture only the main effects.

\begin{table}
\caption{KDE-MC estimates of sensitivity indices from the total variational distance and reversed Kullback-Leibler divergence and exact Sobol indices in Example 3}
\begin{centering}
{\footnotesize{}}%
\begin{tabular}{ccccccccccc}
\hline
 & \multicolumn{3}{c}{{\footnotesize{$\begin{array}{c}\!\!\!\!\!\!\!
{}\bar{H}_{\{i\},TV}^{(L)} \\
(\mathrm{total\: variational\: distance})
\end{array}$}}} &  & \multicolumn{3}{c}{{\footnotesize{$\begin{array}{c}\!\!\!\!\!\!\!
{}\bar{H}_{\{i\},KL'}^{(L)} \\
(\mathrm{Kullback-\: Leibler\:\mathrm{divergence}})
\end{array}$}}} &  & \multicolumn{2}{c}{{\footnotesize{$\begin{array}{c}\!\!\!\!\!\!\!
S_{\{i\}} \\
(\mathrm{Sobol\: index})
\end{array}$}}}\tabularnewline
\cline{2-4} \cline{6-8} \cline{10-11}
{\footnotesize{$\begin{array}{c}\!\!\!\!\!\!\!
\mathrm{Random}\\\!\!\!\!\!\!\!
\mathrm{variable}
\end{array}$}} & \!\!\!\!{\footnotesize{$L=10^{4}$}} & \!\!\!\!{\footnotesize{$L=10^{5}$}} & \!\!\!\!{\footnotesize{$L=10^{6}$}} &  & \!\!\!\!{\footnotesize{$L=10^{4}$}} & \!\!\!\!{\footnotesize{$L=10^{5}$}} & \!\!\!\!{\footnotesize{$L=10^{6}$}} &  & \!\!\!\!{\footnotesize{Univariate}} & \!\!\!\!{\footnotesize{Total}}\tabularnewline
\hline
\!\!\!\!\!\!\!{\footnotesize{$X_{1}$}} & \!\!\!\!{\footnotesize{0.299}} & \!\!\!\!{\footnotesize{0.298}} & \!\!\!\!{\footnotesize{0.288}} &  & \!\!\!\!{\footnotesize{0.358}} & \!\!\!\!{\footnotesize{0.369}} & \!\!\!\!{\footnotesize{0.366}} &  & \!\!\!\!{\footnotesize{0.314}} & \!\!\!\!{\footnotesize{0.558}}\tabularnewline
\!\!\!\!\!\!\!{\footnotesize{$X_{2}$}} & \!\!\!\!{\footnotesize{0.729}} & \!\!\!\!{\footnotesize{0.797}} & \!\!\!\!{\footnotesize{0.828}} &  & \!\!\!\!{\footnotesize{0.522}} & \!\!\!\!{\footnotesize{0.505}} & \!\!\!\!{\footnotesize{0.483}} &  & \!\!\!\!{\footnotesize{0.442}} & \!\!\!\!{\footnotesize{0.442}}\tabularnewline
\!\!\!\!\!\!\!{\footnotesize{$X_{3}$}} & \!\!\!\!{\footnotesize{0.291}} & \!\!\!\!{\footnotesize{0.303}} & \!\!\!\!{\footnotesize{0.306}} &  & \!\!\!\!{\footnotesize{0.193}} & \!\!\!\!{\footnotesize{0.219}} & \!\!\!\!{\footnotesize{0.232}} &  & \!\!\!\!{\footnotesize{0}} & \!\!\!\!{\footnotesize{0.244}}\tabularnewline
\hline
\end{tabular}
\par\end{centering}
\label{table5}
\end{table}

To determine the accuracy of the PDD-KDE-MC method, the density-based univariate sensitivity indices discussed in the preceding paragraph were estimated using several PDD approximations of $y$. Legendre orthonormal polynomials, which are consistent with uniform probability distributions of input random variables, were employed for the PDD approximation. Since the right-hand side of (\ref{19}) includes interactive effects of at most two variables, the bivariate PDD approximation is adequate.  However, $y$ is a non-polynomial function; therefore, several polynomial orders are required to study convergence.  Table \ref{table6} lists the estimates of the same two variants of density-based univariate sensitivity indices $\tilde{H}_{\{i\},TV}^{(L,S,m)}$ and $\tilde{H}_{\{i\},KL'}^{(L,S,m)}$, obtained by the PDD-KDE-MC method for $L=10^6$, $S=2$, and $m=4, 6, 8$.  The sensitivity indices for each variable converge with respect to $m$.  When $m=8$, the indices generated by the PDD-KDE-MC method in Table \ref{table6} are very close to those obtained by the KDE-MC method ($L=10^6$) in Table \ref{table5}.  The computational efforts by the PDD-KDE-MC method, measured in terms of the number of function evaluations and listed in Table \ref{table6}, vary from 61 to 217, depending on the values of $m$ chosen; nonetheless, they are markedly lower than the $10^6$ function evaluations required by the KDE-MC method.  Therefore, for sensitivity analysis of complex systems, where $y$ is expensive to evaluate, the KDE-MC method will not be practical and a method like the PDD-KDE-MC method becomes necessary; it will be demonstrated next.

\begin{table}
\caption{PDD-KDE-MC estimates of sensitivity indices from the total variational distance and reversed Kullback-Leibler divergence in Example 3}
\begin{centering}
{\footnotesize{}}%
\begin{tabular}{cccccccc}
\hline
 & \multicolumn{3}{c}{{\footnotesize{$\begin{array}{c}
{}\tilde{H}_{\{i\},TV}^{(L,S,m)},\, L=10^{6},\, S=2 \\
(\mathrm{total\: variational\: distance})
\end{array}$}}} &  & \multicolumn{3}{c}{{\footnotesize{$\begin{array}{c}
{}\tilde{H}_{\{i\},KL'}^{(L,S,m)},\, L=10^{6},\, S=2 \\
(\mathrm{Kullback-\: Leibler\:\mathrm{divergence}})
\end{array}$}}}\tabularnewline
\cline{2-4} \cline{6-8}
{\footnotesize{$\begin{array}{c}
\mathrm{Random}\\
\mathrm{variable}
\end{array}$}} & {\footnotesize{$m=4$}} & {\footnotesize{$m=6$}} & {\footnotesize{$m=8$}} &  & {\footnotesize{$m=4$}} & {\footnotesize{$m=6$}} & {\footnotesize{$m=8$}}\tabularnewline
\hline
{\footnotesize{$X_{1}$}} & {\footnotesize{0.336}} & {\footnotesize{0.287}} & {\footnotesize{0.287}} &  & {\footnotesize{0.264}} & {\footnotesize{0.346}} & {\footnotesize{0.343}}\tabularnewline
{\footnotesize{$X_{2}$}} & {\footnotesize{0.878}} & {\footnotesize{0.857}} & {\footnotesize{0.832}} &  & {\footnotesize{0.630}} & {\footnotesize{0.513}} & {\footnotesize{0.484}}\tabularnewline
{\footnotesize{$X_{3}$}} & {\footnotesize{0.288}} & {\footnotesize{0.300}} & {\footnotesize{0.303}} &  & {\footnotesize{0.150}} & {\footnotesize{0.218}} & {\footnotesize{0.221}}\tabularnewline
{\footnotesize{$\begin{array}{c}
\mathrm{No.\: of\: function}\\
\mathrm{evaluations^{(a)}}
\end{array}$}} & {\footnotesize{61}} & {\footnotesize{127}} & {\footnotesize{217}} &  & {\footnotesize{61}} & {\footnotesize{127}} & {\footnotesize{217}}\tabularnewline
\hline
\end{tabular}
\par\end{centering}{\footnotesize \par}
{\scriptsize{~~~~~~~~~~~~~(a) No. of function evaluations = $N(N-1)m^2/2+Nm+1$.}}
\label{table6}
\end{table}

\subsection{Example 4}
The final example illustrates the PDD-KDE-MC method for sensitivity analysis of an industrial-scale, stochastic mechanics problem. It involves linear-elastic stress analysis of a leverarm in a wheel loader, depicted in Figure \ref{figure5}(a), commonly used in the heavy construction industry. The material body of the leverarm occupies domain $\mathcal{D}\subset\mathbb{R}^{3}$ with boundary $\Gamma$. At a spatial point $\mathbf{s}\in\mathcal{D}$, it is subjected to random surface traction $\mathbf{\bar{t}}(\mathbf{s};\mathbf{X})$ on $\Gamma_{t}$ and random displacement $\mathbf{\bar{u}}(\mathbf{s};\mathbf{X})$ on $\Gamma_{u}$, where $\Gamma_{t}\cap\Gamma_{u}=\emptyset$, with $\Gamma_{t}$ and $\Gamma_{u}$ denoting the parts of the boundary where natural (Neumann) and essential (Dirichlet) boundary conditions are enforced. Then the strong form of the governing equations for a stochastic boundary-value problem in elastostatics calls for finding the displacement $\mathbf{u}(\mathbf{s};\mathbf{X})$
and stress $\boldsymbol{\sigma}(\mathbf{s};\mathbf{X})$ responses, which satisfy $P$-almost surely
\begin{equation}
\begin{array}{rcl}
\boldsymbol{\nabla}\cdot\boldsymbol{\sigma}(\mathbf{s};\mathbf{X})+\mathbf{b}(\mathbf{s};\mathbf{X}) & = & \mathbf{0}\;\mathrm{in}\;\mathcal{D},\\
\boldsymbol{\sigma}(\mathbf{s};\mathbf{X})\cdot\mathbf{n}(\mathbf{s};\mathbf{X}) & = & \mathbf{\bar{t}}(\mathbf{s};\mathbf{X})\;\mathrm{on}\;\Gamma_{t},\\
\mathbf{u}(\mathbf{s};\mathbf{X}) & = & \mathbf{\bar{u}}(\mathbf{s};\mathbf{X})\;\mathrm{on}\;\Gamma_{u},
\end{array}
\label{20}
\end{equation}
where $\boldsymbol{\nabla}:=\{\partial/\partial s_{1},\partial/\partial s_{2},\partial/\partial s_{3}\}$, $\mathbf{n}(\mathbf{s};\mathbf{X})$ is the unit outward normal vector, $\boldsymbol{\sigma}(\mathbf{s};\mathbf{X})=\mathbf{D}(\mathbf{X})\boldsymbol{\epsilon}(\mathbf{s};\mathbf{X})$
is the random stress vector with $\mathbf{D}(\mathbf{X})$ representing the random elasticity matrix, and $\boldsymbol{\epsilon}(\mathbf{s};\mathbf{X})=\boldsymbol{\nabla}_{s}\mathbf{u}(\mathbf{s};\mathbf{X})$
is the random strain vector with $\boldsymbol{\nabla}_{s}$ representing the symmetric part of $\boldsymbol{\nabla}\mathbf{u}(\mathbf{s};\mathbf{X})$.  The finite-element method was used to solve the variational weak form of (\ref{20}).

\begin{figure}[!h]
\begin{centering}
\includegraphics[scale=0.7]{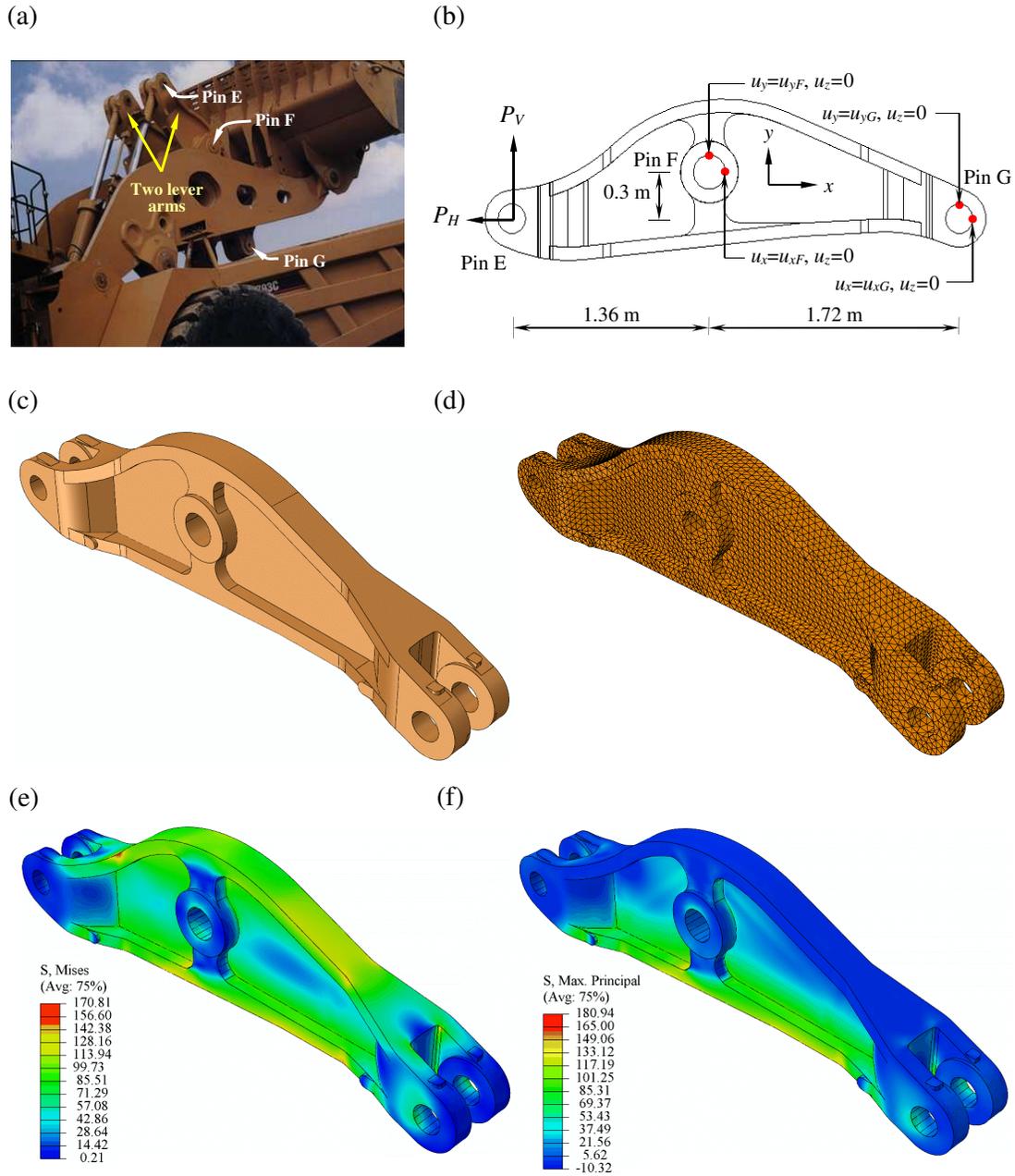}
\par\end{centering}
\caption{
Linear-elastic stress analysis of a leverarm; (a) two leverarms in a wheel loader; (b) geometry and boundary conditions; (c) computer-aided design model; (d) finite-element grid (48,312 elements); (e) von Mises stress contours for a sample input; (f) maximum principal stress contours for a sample input.
}
\label{figure5}
\end{figure}

The loading and boundary conditions of a single leverarm are shown in Figure \ref{figure5}(b). Figure \ref{figure5}(c) and \ref{figure5}(d) present a computer-aided design model and a finite-element grid comprising 48,312 second-order tetrahedral elements of the leverarm, respectively. Two random loads $P_{H}$ and $P_{V}$ acting at pin E can be viewed as input loads due to other mechanical components of the wheel loader. The essential boundary conditions, sketched in Figure \ref{figure5}(b), define random prescribed displacements $u_{xF}$ and $u_{yF}$ at pin F and $u_{xG}$, and $u_{yG}$ at pin G. The leverarm is made of cast steel with random Young's modulus $E$ and random Poisson's ratio $\nu$. The input vector $\boldsymbol{X}=(P_{H},P_{V},E,\nu,u_{xF},u_{yF},u_{xG},u_{yG})\in\mathbb{R}^{8}$ comprises eight independent random variables with their statistical properties specified in Table \ref{table7}. The von Mises stress and maximum principal stress distributions in Figure \ref{figure5}(e) and \ref{figure5}(f), respectively, calculated for an arbitrarily selected sample input, are commonly used for examining material yielding or fatigue damage in mechanical systems.  Both the univariate ($S=1$) and bivariate ($S=2$) PDD approximations with measure-consistent orthogonal polynomials and Gauss quadrature rule \cite{rahman09} were employed for two density-based sensitivity analyses of a stress response from FEA of the leverarm.

\begin{table}
\caption{Statistical properties of input variables in Example 4}
\begin{centering}
\begin{tabular}{cccc}
\hline
{\footnotesize{Random variable }} & {\footnotesize{Probability distribution}} & {\footnotesize{Mean}} & {\footnotesize{Standard deviation}}\tabularnewline
\hline
{\footnotesize{$P_{H}{}^{(\mathrm{a})}$, kN}} & {\footnotesize{Lognormal}} & {\footnotesize{507.69}} & {\footnotesize{76.15}}\tabularnewline
{\footnotesize{$P_{V}{}^{(\mathrm{a})}$, kN}} & {\footnotesize{Lognormal}} & {\footnotesize{1517.32}} & {\footnotesize{227.60}}\tabularnewline
{\footnotesize{$E$, GPa}} & {\footnotesize{Lognormal}} & {\footnotesize{203}} & {\footnotesize{10.15}}\tabularnewline
{\footnotesize{$\nu$}} & {\footnotesize{Lognormal}} & {\footnotesize{0.3}} & {\footnotesize{0.015}}\tabularnewline
{\footnotesize{$u_{xF}$, mm}} & {\footnotesize{Uniform$^{(\mathrm{b})}$}} & {\footnotesize{-5}} & {\footnotesize{$5/\sqrt{3}$}}\tabularnewline
{\footnotesize{$u_{yF}$, mm}} & {\footnotesize{Uniform$^{(\mathrm{c})}$}} & {\footnotesize{5}} & {\footnotesize{$5/\sqrt{3}$}}\tabularnewline
{\footnotesize{$u_{xG}$, mm}} & {\footnotesize{Uniform$^{(\mathrm{c})}$}} & {\footnotesize{5}} & {\footnotesize{$5/\sqrt{3}$}}\tabularnewline
{\footnotesize{$u_{yG}$, mm}} & {\footnotesize{Uniform$^{(\mathrm{b})}$}} & {\footnotesize{-5}} & {\footnotesize{$5/\sqrt{3}$}}\tabularnewline
\hline
\end{tabular}
\par\end{centering}{\footnotesize \par}

{\scriptsize{~~~~~~~~~~~~~~~~~~~~~(a) To be distributed equally (halved)
on the front and back sides of pin E.}}{\scriptsize \par}

{\scriptsize{~~~~~~~~~~~~~~~~~~~~~(b) Uniformly distributed over
$[-10,0]$ mm; to be applied on both sides.}}{\scriptsize \par}

{\scriptsize{~~~~~~~~~~~~~~~~~~~~~(c) Uniformly distributed over
$[0,10]$ mm; to be applied on both sides.}}
\label{table7}
\end{table}

\begin{table}
\caption{PDD-KDE-MC estimates of sensitivity indices from the total variational distance and reversed Kullback-Leibler divergence in Example 4}
\begin{centering}
{\footnotesize{}}%
\begin{tabular}{cccccccccccc}
\hline
 & \multicolumn{5}{c}{{\footnotesize{$\begin{array}{c}\!\!\!\!\!
{}\tilde{H}_{\{i\},TV}^{(L,S,m)},\, L=10^{6} \\
(\mathrm{total\: variational\: distance})
\end{array}$}}} &  & \multicolumn{5}{c}{{\footnotesize{$\begin{array}{c}\!\!\!\!\!
{}\tilde{H}_{\{i\},KL'}^{(L,S,m)},\, L=10^{6} \\
(\mathrm{Kullback-\: Leibler\:\mathrm{divergence}})
\end{array}$}}}\tabularnewline
\cline{2-6} \cline{8-12}
 & \multicolumn{2}{c}{{\footnotesize{$\begin{array}{c}\!\!\!\!\!
\mathrm{Univariate\: PDD}\\
(S=1)^{\mathrm{(a)}}
\end{array}$}}} &  & \multicolumn{2}{c}{{\footnotesize{$\begin{array}{c}\!\!\!\!\!
\mathrm{Bivariate\: PDD}\\
(S=2)^{\mathrm{(b)}}
\end{array}$}}} &  & \multicolumn{2}{c}{{\footnotesize{$\begin{array}{c}\!\!\!\!\!
\mathrm{Univariate\: PDD}\\
(S=1)^{\mathrm{(a)}}
\end{array}$}}} &  & \multicolumn{2}{c}{{\footnotesize{$\begin{array}{c}\!\!\!\!\!
\mathrm{Bivariate\: PDD}\\
(S=2)^{\mathrm{(b)}}
\end{array}$}}}\tabularnewline
\cline{2-3} \cline{5-6} \cline{8-9} \cline{11-12}
{\footnotesize{$\begin{array}{c}\!\!\!\!\!
\mathrm{Random}\\\!\!\!\!\!
\mathrm{variable}
\end{array}$}} & \!\!\!{\footnotesize{$m=2$}} & \!\!\!{\footnotesize{$m=3$}} &  & \!\!\!{\footnotesize{$m=2$}} & \!\!\!{\footnotesize{$m=3$}} &  & \!\!\!{\footnotesize{$m=2$}} & \!\!\!{\footnotesize{$m=3$}} &  & \!\!\!{\footnotesize{$m=2$}} & \!\!\!{\footnotesize{$m=3$}}\tabularnewline
\hline
\!\!\!{\footnotesize{$P_{H}$}} & \!\!\!{\footnotesize{0.021}} & \!\!\!{\footnotesize{0.021}} &  & \!\!\!{\footnotesize{0.020}} & \!\!\!{\footnotesize{0.020}} &  & \!\!\!{\footnotesize{0.005}} & \!\!\!{\footnotesize{0.005}} &  & \!\!\!{\footnotesize{0.005}} & \!\!\!{\footnotesize{0.005}}\tabularnewline
\!\!\!{\footnotesize{$P_{V}$}} & \!\!\!{\footnotesize{0.085}} & \!\!\!{\footnotesize{0.085}} &  & \!\!\!{\footnotesize{0.085}} & \!\!\!{\footnotesize{0.085}} &  & \!\!\!{\footnotesize{0.017}} & \!\!\!{\footnotesize{0.017}} &  & \!\!\!{\footnotesize{0.016}} & \!\!\!{\footnotesize{0.016}}\tabularnewline
\!\!\!{\footnotesize{$E$}} & \!\!\!{\footnotesize{0.085}} & \!\!\!{\footnotesize{0.085}} &  & \!\!\!{\footnotesize{0.088}} & \!\!\!{\footnotesize{0.088}} &  & \!\!\!{\footnotesize{0.016}} & \!\!\!{\footnotesize{0.016}} &  & \!\!\!{\footnotesize{0.019}} & \!\!\!{\footnotesize{0.019}}\tabularnewline
\!\!\!{\footnotesize{$\nu$}} & \!\!\!{\footnotesize{0.018}} & \!\!\!{\footnotesize{0.018}} &  & \!\!\!{\footnotesize{0.018}} & \!\!\!{\footnotesize{0.018}} &  & \!\!\!{\footnotesize{0.004}} & \!\!\!{\footnotesize{0.004}} &  & \!\!\!{\footnotesize{0.004}} & \!\!\!{\footnotesize{0.004}}\tabularnewline
\!\!\!{\footnotesize{$u_{xF}$}} & \!\!\!{\footnotesize{0.436}} & \!\!\!{\footnotesize{0.436}} &  & \!\!\!{\footnotesize{0.434}} & \!\!\!{\footnotesize{0.434}} &  & \!\!\!{\footnotesize{0.345}} & \!\!\!{\footnotesize{0.345}} &  & \!\!\!{\footnotesize{0.344}} & \!\!\!{\footnotesize{0.344}}\tabularnewline
\!\!\!{\footnotesize{$u_{yF}$}} & \!\!\!{\footnotesize{0.084}} & \!\!\!{\footnotesize{0.084}} &  & \!\!\!{\footnotesize{0.083}} & \!\!\!{\footnotesize{0.083}} &  & \!\!\!{\footnotesize{0.007}} & \!\!\!{\footnotesize{0.007}} &  & \!\!\!{\footnotesize{0.007}} & \!\!\!{\footnotesize{0.007}}\tabularnewline
\!\!\!{\footnotesize{$u_{xG}$}} & \!\!\!{\footnotesize{0.441}} & \!\!\!{\footnotesize{0.441}} &  & \!\!\!{\footnotesize{0.438}} & \!\!\!{\footnotesize{0.438}} &  & \!\!\!{\footnotesize{0.344}} & \!\!\!{\footnotesize{0.344}} &  & \!\!\!{\footnotesize{0.343}} & \!\!\!{\footnotesize{0.343}}\tabularnewline
\!\!\!{\footnotesize{$u_{yG}$}} & \!\!\!{\footnotesize{0.085}} & \!\!\!{\footnotesize{0.085}} &  & \!\!\!{\footnotesize{0.085}} & \!\!\!{\footnotesize{0.085}} &  & \!\!\!{\footnotesize{0.007}} & \!\!\!{\footnotesize{0.007}} &  & \!\!\!{\footnotesize{0.007}} &\!\!\! {\footnotesize{0.007}}\tabularnewline
\!\!\!\!\!{\footnotesize{No. of FEA}} & \!\!\!{\footnotesize{25}} & \!\!\!{\footnotesize{33}} &  & \!\!\!{\footnotesize{277}} & \!\!\!{\footnotesize{481}} &  & \!\!\!{\footnotesize{25}} & \!\!\!{\footnotesize{33}} &  & \!\!\!{\footnotesize{277}} & \!\!\!{\footnotesize{481}}\tabularnewline
\hline
\end{tabular}
\par\end{centering}{\footnotesize \par}

{\scriptsize{~~~~~~~~~(a) No. of FEA = $N(m+1)+1$.}}{\scriptsize \par}

{\scriptsize{~~~~~~~~~(b) No. of FEA = $N(N-1)(m+1)^2/2+N(m+1)+1$.}}
\label{table8}
\end{table}

Table \ref{table8} presents the approximate univariate sensitivity indices $\tilde{H}_{\{i\},TV}^{(L,S,m)}$ (total variational distance) and $\tilde{H}_{\{i\},KL'}^{(L,S,m)}$ (reversed Kullback-Leibler divergence) of the maximum von Mises stress by the PDD-KDE-MC method. The PDD expansion coefficients were estimated by $S$-variate dimension-reduction integration \cite{xu04}, requiring one- ($S=1$) or at most two-dimensional ($S=2$) Gauss quadratures. The order $m$ of orthogonal polynomials and number $n$ of Gauss quadrature points in the dimension-reduction numerical integration are $2\le m\le3$ and $n=m+1$, respectively. The indices are broken down according to the choice of selecting $S=1,2$ and $m=2,3$.  In all PDD approximations, the sample size $L=10^6$. The sensitivity indices by the PDD-KDE-MC methods in Table \ref{table8} quickly converge with respect to $S$ and/or $m$. Since FEA is employed for response evaluations, the computational effort of the PDD-KDE-MC method comes primarily from numerically determining the PDD expansion coefficients. The expenses involved in estimating the PDD coefficients vary from 25 to 33 FEA for the univariate PDD approximation and from 277 to 481 FEA for the bivariate PDD approximation, depending on the two values of $m$. Based on the sensitivity indices in Table \ref{table8}, the horizontal boundary conditions ($u_{xF}$ and $u_{xG}$) are highly important; the vertical load ($P_{V}$), elastic modulus ($E$), and vertical boundary conditions ($u_{yF}$ and $u_{yG}$) are slightly important; and the horizontal load ($P_{H}$) and Poisson's ratio ($\nu$) are unimportant in influencing the maximum von Mises stress.

It is important to recognize that the respective univariate and bivariate PDD solutions in this particular problem are practically the same.  Therefore, the univariate PDD solutions are not only accurate, but also highly efficient. This is because of a realistic example chosen, where the individual main effects of input variables on the von Mises stress are dominant over their interactive effects. Finally, this example also demonstrates the non-intrusive nature of the PDD-KDE-MC method, which can be easily integrated with commercial or legacy computer codes for analyzing large-scale complex systems.

\section{Conclusion}
A general multivariate sensitivity index, referred to as the $f$-sensitivity index, is presented for global sensitivity analysis. The index is founded on the $f$-divergence, a well-known divergence measure from information theory, between the unconditional and conditional probability measures of a stochastic response.  The index is applicable to random input following dependent or independent probability distributions.  Since the class of $f$-divergence subsumes a wide variety of divergence or distance measures, numerous sensitivity indices can be defined, affording diverse choices to sensitivity analysis.  Several existing sensitivity indices or measures, including mutual information, squared-loss mutual information, and Borgonovo's importance measure, are shown to be special cases of the proposed sensitivity index. A detailed theoretical analysis reveals the $f$-sensitivity index to be non-negative and endowed with a range of values, where the smallest value is \emph{zero}, but the largest value may be finite or infinite, depending on the generating function $f$ chosen.  The index vanishes or attains the largest value when the unconditional and conditional probability measures coincide or are mutually singular. Unlike the variance-based Sobol index, which is invariant only under affine transformations, the $f$-sensitivity index is invariant under nonlinear but smooth and uniquely invertible transformations.  If the output variable and a subset of input variables are statistically independent, then there is no contribution from that subset of input variables to the sensitivity of the output variable.  For a metric divergence, the resultant $f$-sensitivity index for a group of input variables increases from the unconditional sensitivity index for a subgroup of input variables, but is limited by the residual term emanating from the conditional sensitivity index.

Three new approximate methods, namely, the MC, KDE-MC, and PDD-KDE-MC methods, are proposed to estimate the $f$-sensitivity index.  The MC and KDE-MC methods are both relevant when a stochastic response is inexpensive to evaluate, but the methods depend on how the probability densities of a stochastic response are calculated or estimated. The PDD-KDE-MC method, predicated on an efficient surrogate approximation, is relevant when analyzing high-dimensional complex systems, demanding expensive function evaluations. Therefore, the computational burden of the MC and KDE-MC methods can be significantly alleviated by the PDD-KDE-MC method.  In all three methods developed, the only requirement is the availability of input-output samples, which can be drawn either from a given computational model or from actual raw data.  Numerical examples, including a computationally intensive stochastic boundary-value problem, demonstrate that the proposed methods provide accurate and economical estimates of density-based sensitivity indices.

\end{document}